\newcommand{\ind}{\operatorname{ind}}
\def\sub{\subset}
\def\sm{\setminus}
\def\emp{\varnothing}
\def\res{\restriction}
\let\mathcal\mathscr
\newtheorem{theorem}{Theorem}[section]
\newtheorem{corollary}[theorem]{Corollary}
\newtheorem{lemma}[theorem]{Lemma}
\theoremstyle{definition}
\begin{document}

{

\renewcommand*{\thefootnote}{$\star$}

\title{The completeness\\ of free Boolean topological groups$^\star$}
\footnotetext[0]{The work of the first listed author was financially supported by the Russian Science Foundation, 
grant~22-11-00075-P. The second listed author was partially supported by CONAHCyT 
(Ciencia de Frontera 2019) of Mexico, grant FORDECYT-PRONACES/64356/2020.} 

}

\author{Ol'ga V. Sipacheva}

\address{Department of General Topology and Geometry, Faculty of Mechanics
and Mathematics, M. V. Lomonosov Moscow State University, Leninskie Gory 1, 
Moscow, 199991 Russia}
\email[O. Sipacheva]{osipa@gmail.com}

\author{Mikhail G. Tkachenko}

\address{Department of Mathematics, Universidad Aut\'onoma Metropolitana, 
Av. San Rafael Atlixco 186, Col. Vicentina, Iztapalapa, C.P. 09340, 
Mexico City, Mexico}
\email[M. Tkachenko]{mich@xanum.uam.mx}

\begin{abstract}
It is proved that the free Boolean topological group $B(X)$ on a
Tychonoff space $X$ is Weil complete if and only if the space $X$ 
is Dieudonn\'e complete. This result provides a positive answer 
to a question posed by the first listed author in 2015. 
\end{abstract}

\keywords
{Free Boolean topological group, 
Weil complete, Dieudonn\'e complete, Graev extension of a pseudometric}

\subjclass[2020]{22A05, 54H11}

 \maketitle

%%%%%%%%%%%%%%%%%%%%%%%%%%%%%%%%%%%%%%%%%
%%%%%%%%%%%%%%%%  I N T R O D U C T I O N  %%%%%%%%%%%%
\section{Introduction}
%%%%%%%%%%%%%%%%%%%%%%%%%%%%%%%%%%%%%%%%%
In this article, it is assumed that all topological groups are Hausdorff 
and all spaces are Tychonoff. 

Given a space $X$, $A(X)$ and $B(X)$ denote, respectively, the free Abelian 
topological group and the free Boolean topological group on $X$. Free Boolean 
topological groups and free Abelian topological groups share many properties. 
For example, $\ind A(X)=0$ and $\ind B(X)=0$ provided that $\dim X=0$. These 
facts were proved in \cite[Theorem~2.1]{Si5} and \cite[Theorem~8]{Sip}, 
respectively. It is also known that the free Abelian topological group $A(X)$ 
on a Dieudonn\'e complete space $X$ is Weil (equivalently, Ra\u{\i}kov) 
complete (see \cite[Theorem~4]{Tk83} or \cite[Theorem~7.9.6]{AT}). In 
\cite[Problem~6]{Sip} O.~Sipacheva asked whether the free Boolean topological 
group on a Dieudonn\'e complete space is Weil complete. Our main objective is 
to give a positive answer to Sipacheva's question.

The topology of a topological group is determined by the family 
of left-invariant continuous pseudometrics on the group 
\cite[Theorem~8.2]{HR}. Let $X$ be a space and $\mathcal{P}_X$
the family of continuous pseudometrics on $X$. In the case of 
the free Abelian topological group $A(X)$ on a space $X$, the 
topology of $A(X)$ is determined by the family of Graev 
extensions of pseudometrics in $\mathcal{P}_X$ to $A(X)$ (see
\cite{Marx} or \cite{Nick}). A similar result, Theorem~\ref{Th_Ab_desc},
is valid for the free Boolean group $B(X)$ on $X$. But the construction of the 
Graev extension of pseudometrics from $X$ to $B(X)$ and the properties of these 
extensions are quite different from those in the case of $A(X)$ (see 
\cite[p.~498]{Sip} regarding this issue). The Graev extension is our main 
technical tool in this article. Therefore, we present a (relatively) detailed 
exposition of all the necessary material regarding the Graev extension of 
pseudometrics from $X$ to $B(X)$ and then apply it to prove 
Theorem~\ref{Th:Main} that the group $B(X)$ is Weil complete if and only if the 
space $X$ is Dieudonn\'e complete.

\subsection{Notation and terminology}
We remind that a \emph{Boolean group} $B$ is a group in which all elements 
except the neutral one are of order 2. It is well known and easy to see 
that any Boolean group is Abelian; moreover, any such group is a vector 
space over the two-element field $\mathbb F_2$ and therefore has a basis. Thus, 
any abstract (without topology) Boolean group is free and can be identified 
with the set of all finite subsets of its basis under the operation of 
symmetric difference. Given a non-empty set $X$ with a distinguished point 
$e\in X$, we denote by $B_a(X)$ the \emph{abstract free Boolean group on $X$ in 
the sense of Graev}, i.e., the Boolean group with basis $X\setminus\{e\}$ and 
zero identified with the element $e$ of $X$. For a Tychonoff space $X$ (with a 
distinguished point $e$), the \emph{free Boolean topological group on $X$ in 
the sense or Graev}, or simply the \emph{free Boolean topological group on 
$X$}, is denoted by $B(X)$. It is defined by the following conditions: 
\begin{enumerate} 
\item[(i)]  
$B(X)$ is a topological Boolean group, $X$ is a subspace of $B(X)$, and $B(X)$ 
is generated by $X$; 
\item[(ii)] 
every continuous mapping $f\colon X\to H$ to a Boolean topological group $H$ 
satisfying $f(e)=e_H$ extends to a continuous homomorphism $\widehat{f}\colon 
B(X)\to H$. 
\end{enumerate} 
As an abstract group, $B(X)$ coincides with $B_a(X)$; in particular, the set 
$X\sm\{e\}$ is linearly independent in~$B(X)$. A standard argument (see 
\cite[Section~2]{Gra}) shows that $B(X)$ does not depend on the choice of the 
distinguished point $e\in X$; in other words, the groups $B(X)$ 
corresponding to all possible choices of $e$ are topologically isomorphic. 

The more familiar \emph{free Boolean topological group $B_M(X)$ on $X$ in the 
sense of Markov} is defined by the similar conditions  
\begin{enumerate} 
\item[(i$^\prime$)] 
$B_M(X)$ is a topological Boolean group, $X$ is a subspace of $B_M(X)$, and 
$B_M(X)$ is generated by $X$;
\item[(ii$^\prime$)] 
every continuous mapping from $X$ to a 
Boolean topological group $H$ extends to a continuous homomorphism 
$B_M(X)\to H$.
\end{enumerate}

Let $x_0\notin X$, and let $X_\ast$ be the topological sum $X\oplus\{x_0\}$, 
i.e., the space $X\cup \{x_0\}$ containing  $X$ as a clopen subspace. 
An easy verification similar to the one in \cite[Section~2]{Gra} shows that the 
groups $B_M(X)$ and $B(X_\ast)$ are topologically isomorphic. Thus, studying 
free Boolean topological groups in the sense of Markov reduces to studying free 
Boolean topological groups.

Every topological group $G$ with identity element $1$ and topology $\tau$ 
carries three natural group uniformities, the left uniformity $\mathscr V^l_G$ 
with base 
$$
\{\{(g,h)\in G\times G: h\in gV\}: \text{$V$ is a neighborhood of $1$}\}, 
$$
the right uniformity $\mathscr V^r_G$ with base 
$$
\{\{(g,h)\in G\times G: h\in Vg\}: \text{$V$ is a neighborhood of $1$}\}, 
$$ 
and the two-sided uniformity $\mathscr V_G$ with base 
$$ 
\{\{(g,h)\in G\times G: h\in gV\cap Vg\}: \text{$V$ is a neighborhood of 
$1$}\}. 
$$
It is well known that each of these uniformities induces the topology $\tau$ 
on~$G$ \cite[Example~8.1.17]{Eng}.

Recall that a filter $\mathscr F$ on a uniform space $X$ with uniformity 
$\mathscr U$ is said to be \emph{Cauchy} with respect to $\mathscr U$ if each 
$V\in \mathscr U$ contains $F\times F$ for some $F\in \mathscr F$. A filter 
$\mathscr F$ on $X$ \emph{converges} to a point $x\in X$ if $x\in \bigcap 
\{\overline F: F\in \mathscr F\}$ (the closures are in the topology generated 
by $\mathscr U$). 

A topological group $G$ is said to be  \emph{Ra\u{\i}kov 
complete} (\emph{Weil complete}) if it is complete with respect to $\mathscr 
V_G$ (with respect to $\mathscr V^l_G$). Basic information about Ra\u{\i}kov 
and Weil complete groups can be found in \cite[Section~3.6]{AT} (see also 
\cite{Raikov}). In topological Abelian groups, the uniformities $\mathscr 
V^l_G$ and $\mathscr V_G$ coincide, so that  Weil completeness and Ra\u{\i}kov 
completeness coincide as well. Note that a filter $\mathscr F$ on an Abelian 
group $G$ is Cauchy if and only if, for any neighborhood $U$ of zero in $G$, 
there exists an $F\in \mathscr F$ such that $F-F\subset U$. 

In what follows, given a space $X$, we denote the left (or, equivalently, 
two-sided) group uniformity $\mathscr V_{B(X)}$ by $\mathscr V$ and call it 
simply the \emph{group uniformity} of~$B(X)$. 

A space $X$ is \emph{Dieudonn\'e complete} if there exists a complete 
uniformity on $X$ compatible with the topology of $X$, which is the case if 
and only if the universal uniformity of $X$ is complete. Equivalently, $X$ is 
Dieudonn\'e complete if it is homeomorphic to a closed subspace of the 
Tychonoff product of a family of metrizable spaces. These and some other facts 
concerning Dieudonn\'e completeness can be found in \cite[Section~8.5.13]{Eng}.

We use the notation $\mathbb N$ for the set of positive integers and $\mathbb 
N_0$ for the set of non-negative integers. Given a subset $A$ of a 
topological space, by $\overline A$ we denote the closure of $A$ in this space.

%%%%%%%%%%%%%%%%%%%%%%%%%%%%%%%%%%%%%%%%%%%%
%%%%%%%%%%%%%%%%%%%%%%%%%%%%%%%%%%%%%%%%%%%%
\section{Completeness of $B(X)$}\label{Sec:2}

The main result of this paper is the following theorem.
 
\begin{theorem}\label{Th:Main}
The free Boolean topological group $B(X)$ on a Tychonoff 
space $X$ is Weil complete if and only if $X$ is Dieudonn\'e 
complete.
\end{theorem}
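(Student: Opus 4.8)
The plan is to prove the two implications separately, with the ``only if'' direction being the easier one and the ``if'' direction carrying the bulk of the work. For the ``only if'' direction, suppose $B(X)$ is Weil complete. Since $B(X)$ is a topological Abelian group, Weil completeness equals Ra\u{\i}kov completeness, and $X$ sits in $B(X)$ as a closed subspace (this is a standard fact about free topological groups, following from the fact that the ``length'' function is lower semicontinuous and $X$ is the set of elements of length $1$). A closed subspace of a Ra\u{\i}kov complete group is complete in the induced uniformity; but the uniformity that $B(X)$ induces on $X$ is the universal (fine) uniformity of $X$, because every continuous pseudometric on $X$ extends to a continuous (Graev) pseudometric on $B(X)$ by Theorem~\ref{Th_Ab_desc}, and conversely every continuous pseudometric on $B(X)$ restricts to a continuous one on $X$. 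Hence the universal uniformity of $X$ is complete, i.e.\ $X$ is Dieudonn\'e complete.

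For the ``if'' direction, assume $X$ is Dieudonn\'e complete; I must show every Cauchy filter $\mathscr F$ on $(B(X),\mathscr V)$ converges. Fix such an $\mathscr F$. The first step is a localization: using the fact that the topology of $B(X)$ is generated by the Graev extensions $\widehat d$ of continuous pseudometrics $d\in\mathcal P_X$, being Cauchy means that for each $d$ there is $F_d\in\mathscr F$ of $\widehat d$-diameter $<\varepsilon$ for every prescribed $\varepsilon$. The second step is to bound the word length on a tail of $\mathscr F$: one shows that a $\mathscr V$-Cauchy filter must eventually lie in some $B_n(X)=\{\,g:\ell(g)\le n\,\}$. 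This is where the specific combinatorics of the Graev extension on $B(X)$ enter—one needs a pseudometric (built from a suitable $d\in\mathcal P_X$ separating enough points, available since $X$ is Tychonoff) whose Graev extension grows with length, so that unbounded length forces non-Cauchyness. Having trapped $\mathscr F$ inside $B_n(X)$, the third step is to push the filter down to $X$: the multiplication map $\mu_n\colon (X\oplus\{e\})^{n}\to B_n(X)$ is a continuous surjection, and one analyzes the trace of $\mathscr F$ on the fibers, producing Cauchy filters on finitely many copies of $X$ (with respect to the uniformity induced from the Graev pseudometrics, which on $X$ is the universal uniformity). By Dieudonn\'e completeness each of these converges in $X$, and one assembles a candidate limit point $g_0\in B_n(X)\subseteq B(X)$.

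The final step is to verify that $\mathscr F$ actually converges to $g_0$, i.e.\ that for every continuous pseudometric $d$ on $X$ and every $\varepsilon>0$ there is $F\in\mathscr F$ with $F\subseteq\{g:\widehat d(g,g_0)<\varepsilon\}$. This uses the key structural property of the Graev extension $\widehat d$: it is computed by an explicit optimal ``matching/pairing'' formula on reduced words over $X$, so that $\widehat d$-closeness of two short words is controlled by $d$-closeness of the letters after an optimal pairing. The convergence of the letter-filters in $X$, combined with the continuity and the triangle inequality for $\widehat d$, yields the conclusion.

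The main obstacle, I expect, is the second step—showing a Cauchy filter is eventually of bounded length—together with making the ``descent to $X$'' in step three genuinely yield Cauchy filters on $X$ in the \emph{universal} uniformity rather than merely in some coarser uniformity. The subtlety is precisely the one flagged in the introduction: unlike for $A(X)$, the Graev extension to $B(X)$ does not interact with word length in the naive way (there is no sign cancellation; everything is mod $2$), so the length bound and the fiberwise analysis require the detailed properties of $\widehat d$ developed in the technical sections, in particular the behavior of the optimal pairing when letters are allowed to cancel in pairs. Handling this carefully, and checking that the reassembled element $g_0$ does not accidentally shorten (which would move it out of the stratum where the estimates were made), is the delicate part of the argument.
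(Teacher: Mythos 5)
Your ``only if'' direction is correct and is essentially the argument the paper leaves implicit: $X$ is closed in $B(X)$ (Corollary~\ref{Th_clo}), the group uniformity of $B(X)$ restricts to the universal uniformity of $X$ (Lemma~\ref{L_unif}), and a closed subset of a complete uniform space is complete, so the universal uniformity of $X$ is complete.

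The ``if'' direction has a genuine gap at your second step. The claim that a $\mathscr V$-Cauchy filter must eventually lie in some $B_n(X)$ is false: for an infinite $X$ each stratum $B_n(X)$ is closed with empty interior in $B(X)$, so already the neighborhood filter of any point --- which is Cauchy and convergent --- has no member contained in any $B_n(X)$. Nor can your proposed mechanism work: there is no continuous pseudometric $d$ whose Graev extension ``grows with length,'' since for any $d$ and any $\varepsilon>0$ one can choose pairs $a_i,b_i$ with $\sum_{i\le m} d(a_i,b_i)<\varepsilon$ and obtain elements $(a_1+b_1)+\cdots+(a_m+b_m)$ of arbitrarily large length but $\widehat d$-norm less than $\varepsilon$. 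What is actually true is weaker and asymmetric. On one hand, Cauchyness gives, for each neighborhood $W$ of zero, some $F\in\mathscr F$ and some $k$ \emph{depending on} $W$ with $F\subset g_0+W\subset B_k(X)+W$. On the other hand, if $\mathscr F$ fails to converge, then completeness of the strata (your step three, which is the paper's Lemma~\ref{Le:modif} and is correctly sketched, including the need to localize the addition map to a product of disjoint closed balls where it is injective) together with Lemma~\ref{Le:add1} shows that for each $n$ the filter eventually avoids $B_n(X)+V_n$ for some neighborhood $V_n$ \emph{depending on} $n$. Reconciling these two facts is the real content of the proof and requires the special neighborhoods $W_D=\bigcup_n(W_1+\cdots+W_n)$ of Lemma~\ref{Le:sum}, built from a \emph{sequence} of pseudometrics, together with a diagonalization over a doubly indexed family $D_n$ producing a single neighborhood $W_D$ with $B_n(X)+W_D\subset B_{3n}(X)+W_{D_n}$ for all $n$ simultaneously; the contradiction then comes from comparing this with $F\subset B_k(X)+W_D$. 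This mechanism is entirely absent from your outline, so the argument does not close: your steps three and four are sound in spirit, but they are only reachable after the dichotomy-plus-diagonalization, not after a (nonexistent) length bound on a tail of the filter.
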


Our proof of Theorem~\ref{Th:Main} is based on 
several statements given below. Some of them are almost exact duplicates of 
earlier results for free Abelian topological groups (see 
\cite[Section~7.2]{AT}). To make our exposition self-contained, we give the 
proofs of all auxiliary results, occasionally omitting some standard 
components.

Recall that a pseudometric $\varrho$ on an Abelian group $G$ is said to be 
\emph{invariant} if $\varrho(x, y)= \varrho(x+z, y+z)$ for any $x, y, z\in G$. 

In \cite[p.~498]{Sip}, a procedure for extending continuous pseudometrics on 
a space $X$ to invariant continuous pseudometrics on $B(X)$ was described 
without proof (this is an adaptation of Graev's procedure for extending 
pseudometrics to free topological groups). We provide some details along the 
lines of~\cite{Sip}. 

Let $\varrho$ be a pseudometric on a non-empty set $X$. To begin, we extend 
$\varrho$ to $B_a(X)$. Given an $h\in B_a(X)\setminus \{e\}$, we consider all 
possible representations of $h$ in the form 
\begin{equation} 
\label{eq:r1} 
h= (x_1+y_1)+\cdots+(x_n+y_n), 
\end{equation} 
where $n\in \mathbb N$ and $x_i,y_i\in X$ for all $i\leq n$. To every 
representation $(\ref{eq:r1})$ of $h$ we assign the non-negative number 
\begin{equation}
\label{eq:sum2}
\sum_{i=1}^n \varrho(x_i,y_i). 
\end{equation} 
Let $N_\varrho(h)$ be the infimum of the sums $(\ref{eq:sum2})$ corresponding 
to all representations of $h$ in the form \eqref{eq:r1}. We have 
$N_\varrho(g+h)\leq N_\varrho(g) + N_\varrho(h)$, because the sum of 
representations of $g,h\in B_a(X)$ is a representation of $g+h$. It is also 
clear that $N_\varrho(e)=0$ and $N_\varrho(g)\geq 0$ for each $g\in B_a(X)$. 
Since every element $g\in B_a(X)$ satisfies $-g=g$, we see that $N_\varrho$ is 
a \emph{prenorm} on $B_a(X)$ in the sense of \cite[Section~3.3]{AT}. 

We set $\widehat{\varrho}(g,h)= N_\varrho(g+h)$ for $g,h\in B_a(X)$. Clearly, 
$\widehat \varrho$ is an invariant pseudometric on $B_a(X)$. 
Informally, the following lemma says that the distance $\widehat{\varrho}(g,h)$ 
between two elements $g=x_1+ \cdots+x_n$ and $h=y_1+\cdots+y_k$ of the group 
$B(X)$ depends solely on the restriction of the 
pseudometric $\varrho$ 
to the finite 
subset $\{e\}\cup \{x_1,\ldots,x_n\} 
\cup\{y_1,\ldots,y_k\}$ of~$X$.

\begin{lemma}\label{Le:clearer}
Suppose that $\varrho$ is a pseudometric on a set $X$ and 
$h=x_1+\cdots+x_n$ is an element of $B_a(X)\sm\{e\}$ 
with pairwise distinct $x_1,\ldots,x_n\in X\sm\{e\}$. Then 
there exists a representation
\begin{eqnarray*}
h=(u_1+v_1)+\cdots+(u_k+v_k) 
\end{eqnarray*}
such that $u_1,v_1,\ldots,u_k,v_k$ are  pairwise distinct and 
$$
\widehat{\varrho}(h,e)=\sum_{i=1}^k \varrho(u_i,v_i). 
$$
Moreover, if $n$ is even, then $2k=n$ and $\{u_1,v_1,\ldots,u_k,v_k\}=
\{x_1,\ldots,\allowbreak x_n\}$, and if $n$ is odd, then $2k=n+1$, 
$\{u_1,v_1,\ldots,u_k\}=\{x_1,\ldots, x_n\}$, and $v_k=e$. 

In particular, the quantity $N_\varrho(h)$, which is 
the infimum of the sums $(\ref{eq:sum2})$ over all possible representations 
of $h$ of the form \eqref{eq:r1}, is in fact the minimum of those sums. 
\end{lemma}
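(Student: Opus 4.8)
The plan is to reduce an arbitrary representation of $h$ of the form \eqref{eq:r1} to a representation whose pairs use each point of $X\sm\{e\}$ at most once, without increasing the associated sum \eqref{eq:sum2}. First I would think of a representation $h=(x_1+y_1)+\cdots+(x_n+y_n)$ as a multigraph $\Gamma$ on vertex set $X$ (or on the finite subset of $X$ actually occurring, together with $e$), where each pair $(x_i,y_i)$ contributes an edge between $x_i$ and $y_i$, possibly a loop if $x_i=y_i$. The element of $B_a(X)$ represented is exactly the symmetric difference of the endpoint multiset, i.e.\ the set of vertices of odd degree in $\Gamma$ (with $e$ discarded), so $h=x_1+\cdots+x_n$ being a sum of pairwise distinct generators means $\Gamma$ has odd-degree vertex set precisely $\{x_1,\ldots,x_n\}$ (modulo $e$).

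Next I would perform the following reductions, each of which does not increase the sum. A loop $(x_i,x_i)$ contributes $\varrho(x_i,x_i)=0$ and can simply be deleted. If some vertex $w$ is incident to two edges $\{w,a\}$ and $\{w,b\}$ (a path of length $2$ through $w$), replace these two pairs by the single pair $(a,b)$; the triangle inequality $\varrho(a,b)\le\varrho(a,w)+\varrho(w,b)$ guarantees the sum does not grow, and this strictly decreases the number of edges. Iterating these moves, after finitely many steps we reach a representation in which every vertex has degree at most $1$, i.e.\ the edges form a partial matching. The vertices of degree $1$ are then exactly the odd-degree vertices, so they are precisely $x_1,\ldots,x_n$ when $n$ is even, giving a perfect matching on $\{x_1,\ldots,x_n\}$ and hence $2k=n$ with $\{u_1,v_1,\ldots,u_k,v_k\}=\{x_1,\ldots,x_n\}$. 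When $n$ is odd there must be an odd number of degree-$1$ vertices among the $x_i$, which is impossible for a matching on those vertices alone; here I would observe that $e$ may legitimately appear as an entry of a pair (since $e+x=x$ in $B_a(X)$ and $\varrho(x,e)$ is a legitimate summand), so one pair has the form $(x_j,e)$, yielding $2k=n+1$, $v_k=e$, and $\{u_1,v_1,\ldots,u_k\}=\{x_1,\ldots,x_n\}$. Finally, since $N_\varrho(h)$ is the infimum over all representations and we have exhibited, for the infimizing sequence, reductions that land among the finitely many matching-type representations on the fixed finite vertex set $\{e\}\cup\{x_1,\ldots,x_n\}$ without increasing the sum, the infimum is attained — it is the minimum over that finite collection — which gives the ``in particular'' clause and the displayed equality $\widehat\varrho(h,e)=\sum_{i=1}^k\varrho(u_i,v_i)$.

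The main obstacle I anticipate is bookkeeping rather than conceptual: one must be careful that the path-contraction move $\{w,a\},\{w,b\}\rightsquigarrow\{a,b\}$ is always available whenever a vertex has degree $\ge 2$ (including the case $a=b$, producing a loop, which is then deleted — still a net decrease in edge count), and that the process genuinely terminates, for which the edge count is a suitable monotone decreasing quantity bounded below. A secondary subtlety is the parity argument pinning down the role of $e$: one should check that $e$ never needs to be introduced when $n$ is even (any matching on an even vertex set is perfect) and is forced exactly once when $n$ is odd, and that introducing the pair $(x_j,e)$ rather than contracting through $e$ is consistent with $e$ being the group identity. Once these points are handled, the passage from ``infimum over all representations'' to ``minimum over the finite set of reduced representations'' is immediate because every representation reduces, without increasing its sum, to one of the finitely many reduced ones.
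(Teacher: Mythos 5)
Your proposal is correct and is essentially the paper's own proof in graph-theoretic dress: the loop deletion and path-contraction moves are exactly the paper's cancellations (drop a summand $z_i+t_i$ with $z_i=t_i$; merge two summands sharing a point via the triangle inequality), and the conclusion that the infimum is a minimum over the finitely many reduced (matching-type) representations on $\{e\}\cup\{x_1,\ldots,x_n\}$, together with the parity argument forcing $v_k=e$ exactly when $n$ is odd, matches the paper step for step. No gaps.
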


\begin{proof}
Let 
\begin{equation}
\label{eq:r3}
h = (z_1+t_1) + \cdots + (z_p+t_p)
\end{equation}
be a representation of $h$ with $z_i,t_i\in X$ for each $i\leq p$.
First, we claim that certain cancellations in $(\ref{eq:r3})$ yield another 
representation 
\begin{equation}
\label{eq:r6}
h = (a_1+b_1) + \cdots + (a_q+b_q)
\end{equation}
such that $q\leq p$, $\{a_1,b_1,\ldots,a_q,b_q\}\sub 
\{z_1,t_1,\ldots,z_p,t_p\}$, the elements $a_1,b_1,\ldots,a_q,b_q$
are pairwise distinct, and 
\begin{equation}
\label{eq:r9}
\sum_{i=1}^q \varrho(a_i,b_i)\leq 
\sum_{i=1}^p \varrho(z_i,t_i).
\end{equation}
Clearly, we may assume that $z_i\neq t_i$ for each $i\leq p$; otherwise,
we delete the summand $z_i+t_i$ from the representation $(\ref{eq:r3})$
of $h$, affecting neither $h$ nor the sum on the right-hand side of 
$(\ref{eq:r9})$. Suppose that $i\ne j$ and the summands $(z_i+t_i)$
and $(z_j+t_j)$ in $(\ref{eq:r3})$ contain equal elements. We may assume 
without loss of generality that $z_i=z_j$, because the group $B_a(X)$ is 
Abelian. We replace the two summands $(z_i+t_i)$ and $(z_j+t_j)$ in 
$(\ref{eq:r3})$ with the single summand $(t_i+t_j)$, thus obtaining a shorter 
representation of $h$. In view of the triangle inequality we have 
$\varrho(t_i,t_j)\leq \varrho(t_i,z_i) + \varrho(z_j,t_j)$ (here $z_i=z_j$), 
so that the sum of distances for the new representation of $h$ does not 
increase. Proceeding, we obtain a reduced representation of $h$ of the form 
$(\ref{eq:r6})$ satisfying condition $(\ref{eq:r9})$. In particular, all 
elements $a_1,b_1,\ldots,a_q,b_q$ of this representation are pairwise 
distinct and $\{a_1,b_1,\ldots,a_q,b_q\}\sub \{z_1,t_1,\ldots,z_p,t_p\}$, 
because the transformations of the given representation of $h$ performed above 
introduce no new elements of~$X$.

Note that $h=x_1+\dots+x_n$ is an expansion of $h$ in the basis $X\setminus 
\{e\}$. Since the elements $a_1,b_1,\ldots,a_q,b_q$ are 
pairwise distinct, it follows that at most one of them equals $e$ and 
the remaining ones belong to $X\setminus \{e\}$. 
Therefore, $\{a_1,b_1,\ldots,a_q,b_q\}\subset \{x_1,\dots, x_n\}\cup\{e\}$.  
Clearly, $h$ has only finitely many representations of the form \eqref{eq:r6} 
satisfying this condition. Since inequality \eqref{eq:r9} holds for all such 
representations, it follows from the arbitrariness of the representation 
\eqref{eq:r3}  that one of them, say, $h = (u_1+v_1)+\dots+ (u_k+v_k)$, 
satisfies
$$ 
\widehat{\varrho}(h,e) =N_\varrho(h)=\sum_{i=1}^q \varrho(a_i,b_i). 
$$

Since $u_1,v_1,\ldots,u_k,v_k$ are pairwise distinct
and $\{u_1,v_1,\ldots,u_k,v_k\}\sub\{x_1,\ldots,x_n,e\}$, we have $n\leq 2k\leq 
n+1$. Therefore, $n=2k$ if $n$ is even and $n+1=2k$ if $n$ is odd. In the 
latter case, exactly one of the elements 
$u_1,v_1,\ldots,u_k,v_k$ equals $e$, and we can assume that this is $v_k$, 
because $B_a(X)$ is Abelian. This completes the proof of the lemma. 
\end{proof}

\begin{theorem}\label{Th_Gra}
For any pseudometric $\varrho$ on a non-empty set $X$, 
the invariant pseudometric $\widehat\varrho$ on the abstract group $B_a(X)$ is 
an extension of $\varrho$. Moreover, if $X$ is a Tychonoff space and $\varrho$ 
is a continuous pseudometric on $X$, then $\widehat{\varrho}$ is a continuous 
pseudometric on~$B(X)$. 
\end{theorem}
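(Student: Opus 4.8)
The plan is to split the statement into its two assertions and handle them separately. First I would verify that $\widehat\varrho$ restricted to $X$ (viewed inside $B_a(X)$ via the basis, with $e$ as zero) recovers $\varrho$. For $x,y\in X$ with $x\ne y$ and $x,y\ne e$, the element $x+y$ admits the one-term representation $(x+y)$, so $N_\varrho(x+y)\le\varrho(x,y)$; applying Lemma~\ref{Le:clearer} to $h=x+y$ gives a minimizing reduced representation, and since $n=2$ is even the only possibility is $\{u_1,v_1\}=\{x,y\}$, whence $N_\varrho(x+y)=\varrho(x,y)$. The boundary cases $x=e$ or $y=e$ reduce to $N_\varrho(y)$ (resp.\ $N_\varrho(x)$): here $h=y$ has odd length $1$, so Lemma~\ref{Le:clearer} forces $k=1$, $\{u_1\}=\{y\}$, $v_1=e$, giving $N_\varrho(y)=\varrho(y,e)$; and $N_\varrho(e)=0=\varrho(e,e)$ was already noted. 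Thus $\widehat\varrho(x,y)=N_\varrho(x+y)=\varrho(x,y)$ for all $x,y\in X$, so $\widehat\varrho$ extends $\varrho$.

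For the second assertion, assume $X$ is Tychonoff and $\varrho$ continuous. Since $\widehat\varrho$ is an invariant pseudometric on $B_a(X)=B(X)$, its continuity as a function $B(X)\times B(X)\to\mathbb R$ is equivalent to continuity of the associated prenorm $N_\varrho$ at the identity $e$, i.e.\ to the statement that for every $\delta>0$ the set $\{g\in B(X):N_\varrho(g)<\delta\}$ is a neighborhood of $e$ in $B(X)$. By the universal property (ii) of $B(X)$, it suffices to produce a Boolean topological group $H$ and a continuous map $f\colon X\to H$ with $f(e)=e_H$ such that the induced homomorphism $\widehat f\colon B(X)\to H$ satisfies: $N_\varrho(g)$ is controlled by a continuous prenorm on $H$ pulled back along $\widehat f$, and $f$ is an embedding on the relevant scale. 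The natural choice is to let $H$ be the completion, or a suitable quotient, of $B_a(X)$ equipped with the topology generated by $\widehat\varrho$ together with enough other invariant pseudometrics; concretely, one takes the group $B_a(X)$ with the (Hausdorff quotient of the) pseudometric topology given by $\widehat\varrho$ itself, checks that the tautological inclusion $X\to B_a(X)/{\sim}$ is continuous because $\widehat\varrho\!\restriction_{X\times X}=\varrho$ is continuous, and invokes the universal property to get that $\mathrm{id}\colon B(X)\to (B_a(X),\widehat\varrho)$ is continuous, which is exactly continuity of $\widehat\varrho$ on $B(X)$.

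The one point requiring care, and the main obstacle, is verifying that the map $X\to (B_a(X),\widehat\varrho)$ is continuous at points where the distinguished point $e$ interferes — that is, confirming that $\widehat\varrho(x,e)=\varrho(x,e)$ (done above via the odd-length case of Lemma~\ref{Le:clearer}) and more generally that no "shortcut through $e$" makes $\widehat\varrho$ strictly smaller than $\varrho$ on $X\times X$; Lemma~\ref{Le:clearer} is precisely what rules this out, since it guarantees the infimum defining $N_\varrho$ is attained at a reduced representation supported on $\{x_1,\dots,x_n,e\}$. Once that is in hand, continuity of $\widehat\varrho$ on $B(X)$ follows formally: $\widehat f=\mathrm{id}$ is a continuous homomorphism $B(X)\to(B_a(X),\widehat\varrho)$, so $\widehat\varrho(g,h)=\widehat\varrho(\widehat f(g),\widehat f(h))$ is continuous in $(g,h)\in B(X)\times B(X)$. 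I would present the embedding/extension part briefly, as it is the standard Graev argument (cf.\ \cite{Gra}, and the parallel for $A(X)$ in \cite{AT}), and devote the detailed writing to the application of Lemma~\ref{Le:clearer} establishing $\widehat\varrho\!\restriction_{X\times X}=\varrho$.
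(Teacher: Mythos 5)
Your proposal is correct and follows essentially the same route as the paper: the extension property is obtained from Lemma~\ref{Le:clearer} exactly as in the text, and your continuity argument via the universal property applied to the (Hausdorff quotient of the) group $B_a(X)$ with the $\widehat\varrho$-topology is just a rephrasing of the paper's observation that the prenorm $N_\varrho$ induces a group topology on $B_a(X)$ that is coarser than the free topology. The only cosmetic difference is that the paper invokes the ``finest group topology agreeing with the topology of $X$'' formulation rather than the extension property (ii) directly.
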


\begin{proof}
According to Lemma~\ref{Le:clearer}, we have $N_\varrho(x+y)=\varrho(x,y)$
for all $x,y\in X$. Therefore, the pseudometric 
$\widehat{\varrho}$ on $B_a(X)$, which is defined by
\begin{eqnarray*}
\widehat{\varrho}(g,h) = N_\varrho(g+h),  
\end{eqnarray*}
extends $\varrho$, that is, $\widehat{\varrho}(x,y)
=\varrho(x,y)$ for all $x,y\in X$. This proves
the first part of the theorem. 

Suppose that $\varrho$ is a continuous pseudometric on $X$.
Since the group $B_a(X)$ is Abelian, the prenorm $N_\varrho$
induces a (possibly non-Hausdorff) group topology $\tau_\varrho$  on $B_a(X)$ 
whose local base at the zero element $e$ consists of the sets 
$$ 
O_\varepsilon = \{g\in B_a(X): N_\varrho(g)<\varepsilon\},
$$
where $\varepsilon>0$. It follows from the continuity of $\varrho$ that the 
restriction of $\tau_\varrho$ to $X$ is coarser than the topology of~$X$. 

Let us denote the topology of $B(X)$ by $\tau$. It follows from the definition 
of the free Boolean topological group $B(X)$ that $\tau$ is the finest 
group topology among those agreeing with the topology of $X$. Therefore, 
$\tau_\varrho$ is coarser than $\tau$, and hence $\widehat{\varrho}$ is a 
continuous pseudometric on the topological group~$B(X)$. 
\end{proof}

We refer to $\widehat \varrho$ as the \emph{Graev extension} of the 
pseudometric $\varrho$. 

\begin{lemma}\label{L_unif}
The restriction $\mathcal{V}_X=\mathcal{V}\res X$ of the 
group uniformity $\mathcal{V}$ of $B(X)$ to the subspace 
$X\sub B(X)$ coincides with the universal uniformity $\mathcal{U}_X$ 
of~$X$.
\end{lemma}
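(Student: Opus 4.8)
The plan is to establish the two inclusions $\mathcal{V}_X\subseteq\mathcal{U}_X$ and $\mathcal{U}_X\subseteq\mathcal{V}_X$ separately. The first one is essentially formal: the restriction of a uniformity to a subspace induces on that subspace the corresponding subspace topology, and, by condition~(i) in the definition of $B(X)$, the topology that $X$ inherits from $B(X)$ is its original topology; hence $\mathcal{V}_X$ is a uniformity on $X$ compatible with the topology of $X$. Since $\mathcal{U}_X$ is by definition the finest uniformity on $X$ compatible with that topology, every entourage of $\mathcal{V}_X$ is an entourage of $\mathcal{U}_X$, that is, $\mathcal{V}_X\subseteq\mathcal{U}_X$.

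For the reverse inclusion I would use the standard description of the universal uniformity: $\mathcal{U}_X$ is generated by the entourages $W_{\varrho,\varepsilon}=\{(x,y)\in X\times X:\varrho(x,y)<\varepsilon\}$, where $\varrho$ ranges over the continuous pseudometrics on $X$ and $\varepsilon>0$ (see, e.g., \cite[Section~8.1]{Eng}). As $\mathcal{V}_X$ is itself a uniformity, it suffices to check that each such $W_{\varrho,\varepsilon}$ lies in $\mathcal{V}_X$. Fix a continuous pseudometric $\varrho$ on $X$ and $\varepsilon>0$. By Theorem~\ref{Th_Gra}, the Graev extension $\widehat\varrho$ is a continuous invariant pseudometric on $B(X)$ with $\widehat\varrho(x,y)=\varrho(x,y)$ for all $x,y\in X$. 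Consequently $O=\{g\in B(X):N_\varrho(g)<\varepsilon\}$ is the open $\widehat\varrho$-ball of radius $\varepsilon$ about the neutral element $e$ and so is a neighborhood of $e$; and since $B(X)$ is Boolean, $h\in g+O$ is equivalent to $N_\varrho(g+h)<\varepsilon$, i.e., to $\widehat\varrho(g,h)<\varepsilon$. Therefore $V_O=\{(g,h):\widehat\varrho(g,h)<\varepsilon\}$ is a basic entourage of the group uniformity $\mathcal{V}$, and intersecting it with $X\times X$ and invoking $\widehat\varrho\res(X\times X)=\varrho$ gives $V_O\cap(X\times X)=W_{\varrho,\varepsilon}$. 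Hence $W_{\varrho,\varepsilon}\in\mathcal{V}_X$, so $\mathcal{U}_X\subseteq\mathcal{V}_X$, and equality follows.

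I do not expect a genuine obstacle here: the non-trivial content has already been isolated in Theorem~\ref{Th_Gra} (continuity of $\widehat\varrho$ on $B(X)$, and the fact that it restricts to $\varrho$ on $X$), and the argument is the Boolean counterpart of the well-known statement for free Abelian topological groups (cf. \cite[Section~7.2]{AT}). The two points one must not overlook are exactly the ones supplied by that theorem: that $O$ is actually a neighborhood of $e$ in $B(X)$, and that $V_O\cap(X\times X)$ is \emph{precisely} the $\varrho$-ball $W_{\varrho,\varepsilon}$ and not some strictly larger set. Equivalently, and perhaps more transparently, one may phrase the reverse inclusion by observing that every continuous pseudometric on $X$ extends, by Theorem~\ref{Th_Gra}, to a continuous invariant — hence uniformly $\mathcal{V}$-continuous — pseudometric on $B(X)$, so that $\varrho$ is uniformly $\mathcal{V}_X$-continuous; since $\mathcal{U}_X$ is generated by all continuous pseudometrics on $X$, this again yields $\mathcal{U}_X\subseteq\mathcal{V}_X$.
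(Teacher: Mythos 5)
Your proposal is correct and follows essentially the same route as the paper: the first inclusion comes from $\mathcal{V}_X$ being a compatible uniformity on $X$, and the reverse inclusion from writing a basic entourage of $\mathcal{U}_X$ as a pseudometric ball and pulling it back through the Graev extension $\widehat\varrho$, which Theorem~\ref{Th_Gra} guarantees is a continuous invariant pseudometric on $B(X)$ restricting to $\varrho$ on $X$. No gaps.
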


\begin{proof}
The uniformity $\mathcal{V}$ induces the topology of $B(X)$ on $B(X)$ and hence 
$\mathcal{V}_X$ induces the topology of $X$ on  $X\sub B(X)$. Therefore, 
$\mathcal{V}_X$  is coarser than $\mathcal{U}_X$. Conversely, take an arbitrary 
entourage $U\in\mathcal{U}_X$ of the diagonal in $X^2$. By 
\cite[Theorem~8.1.10]{Eng}, there exists a continuous pseudometric $\varrho$ on 
$X$ such that 
$$ 
U_\varrho=\{(x,y)\in X\times X: \varrho(x,y)<1\} \sub U. 
$$ 
Let $\widehat{\varrho}$ be the Graev extension of $\varrho$ to $B(X)$. Then 
$$ 
V_\varrho = \{(g,h)\in B(X)\times B(X): \widehat{\varrho}(g,h)<1\} 
$$ 
is an element of $\mathcal{V}$, because $\widehat{\varrho}$ is a continuous 
invariant pseudometric on $B(X)$. Clearly, 
$U_\varrho=V_\varrho\cap (X\times X)$. Hence $\mathcal{V}_X$ is finer than 
$\mathcal{U}_X$. Therefore, $\mathcal{V}_X=\mathcal{U}_X$. 
\end{proof}

\begin{lemma}\label{Le:Max}
The Graev extension $\widehat{\varrho}$ of a pseudometric 
$\varrho$ on $X$ to $B_a(X)$ is a maximal invariant pseudometric 
on $B_a(X)$ extending $\varrho$. In other words, if $d$ is an 
invariant pseudometric on $B_a(X)$ extending $\varrho$, then 
$d(g,h)\leq \widehat{\varrho}(g,h)$ for all $g,h\in B_a(X)$.
\end{lemma}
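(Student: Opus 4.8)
The statement is that the Graev extension $\widehat\varrho$ dominates every invariant pseudometric $d$ on $B_a(X)$ that restricts to $\varrho$ on $X$. The natural strategy is to exploit the very definition of $N_\varrho$ as an infimum over representations, together with invariance and the triangle inequality for $d$. Since $\widehat\varrho(g,h)=N_\varrho(g+h)$ and $d$ is invariant, it suffices to prove $d(k,e)\le N_\varrho(k)$ for every $k\in B_a(X)$ (apply this to $k=g+h$ and use $d(g,h)=d(g+h,e)$, which holds because $d$ is invariant and $-h=h$). So the whole problem reduces to a single inequality at the neutral element.

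\smallskip

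To prove $d(k,e)\le N_\varrho(k)$, I would fix an arbitrary representation $k=(x_1+y_1)+\cdots+(x_n+y_n)$ with $x_i,y_i\in X$, as in \eqref{eq:r1}. Set $s_0=e$ and $s_j=(x_1+y_1)+\cdots+(x_j+y_j)$ for $j=1,\ldots,n$, so $s_n=k$. Then the triangle inequality for $d$ gives
$$
d(k,e)=d(s_n,s_0)\le\sum_{j=1}^n d(s_j,s_{j-1}).
$$
Now $s_j=s_{j-1}+(x_j+y_j)$, so by invariance of $d$ (adding $s_{j-1}+y_j$ to both coordinates, and using $-z=z$ in a Boolean group) we get $d(s_j,s_{j-1})=d(x_j,y_j)=\varrho(x_j,y_j)$, the last equality because $d$ extends $\varrho$ and $x_j,y_j\in X$. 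Hence $d(k,e)\le\sum_{j=1}^n\varrho(x_j,y_j)$. Taking the infimum over all representations \eqref{eq:r1} of $k$ yields $d(k,e)\le N_\varrho(k)$, and by Lemma~\ref{Le:clearer} this infimum is attained, though for the inequality we only need that it is a lower bound. (One should note the trivial case $k=e$, where both sides are $0$.)

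\smallskip

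Substituting $k=g+h$ gives $d(g,h)=d(g+h,e)\le N_\varrho(g+h)=\widehat\varrho(g,h)$ for all $g,h\in B_a(X)$, which is exactly the asserted maximality. There is no serious obstacle here: the only point requiring a little care is the bookkeeping in the telescoping step — checking that in a Boolean group the invariance identity $d(a+c,b+c)=d(a,b)$ combined with $-z=z$ really does collapse $d(s_{j-1}+(x_j+y_j),\,s_{j-1})$ to $d(x_j,y_j)$ rather than to some shifted quantity. Concretely, adding $c=s_{j-1}+y_j$ to both arguments sends $s_{j-1}+x_j+y_j$ to $x_j$ (since $s_{j-1}+s_{j-1}=e$ and $y_j+y_j=e$) and sends $s_{j-1}$ to $y_j$; this is the heart of the argument and is where all the ``Booleanness'' is used.
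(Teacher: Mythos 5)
Your proof is correct and takes essentially the same approach as the paper: reduce by invariance to comparing $d(h,e)$ with $N_\varrho(h)$, then bound $d(h,e)$ by the $\varrho$-length of a representation via the triangle inequality and the Boolean cancellation you spell out. The only (harmless) difference is that the paper applies this bound to the single minimizing representation supplied by Lemma~\ref{Le:clearer}, whereas you apply it to an arbitrary representation and pass to the infimum, so your version does not even need the infimum to be attained.
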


\begin{proof}
Let $d$ be an arbitrary invariant pseudometric on $B_a(X)$ 
extending $\varrho$, and let $u$ and $v$ be any elements of $B_a(X)$. Since
$d$ and $\widehat{\varrho}$ are invariant, we have $d(u,v)=d(h,e)$ and 
$\widehat{\varrho}(u,v)= \widehat{\varrho}(h,e)$ for $h=u+v$. Therefore, 
it suffices to verify that $d(h,e)\leq \widehat{\varrho}(h,e)$ for each $h\in 
B(X)$. Note that $d(x,y)=\varrho(x,y)$ for all $x,y\in X$, so the invariance 
of $d$ and $\widehat{\varrho}$ implies 
\begin{equation} 
\label{eq:ps1}
d(x+y,e)=d(x,y) = 
\varrho(x,y) = \widehat{\varrho}(x,y) = \widehat{\varrho}(x+y,e). 
\end{equation}

If $h=e$, then $d(h,e)=0=\widehat{\varrho}(h,e)$. So we
assume that $h\neq e$. Let $h=x_1+\cdots+x_n$, where 
$x_1,\ldots,x_n$ are
pairwise distinct elements of $X\setminus\{e\}$. If $n=1$, then $h=x_1\in X$, 
so $d(x_1,e)=\varrho(x_1,e)=\widehat{\varrho}(x_1,e)$. Suppose that 
$n\geq 2$. By Lemma~\ref{Le:clearer}, there exists a representation 
$h=(a_1+b_1)+\cdots+(a_p+b_p)$ with pairwise distinct 
$a_1,b_1,\ldots,a_p,b_p\in X$ such that 
$$ 
\widehat{\varrho}(h,e) = \sum_{i=1}^p \varrho(a_i,b_i). 
$$ 
For every $i\leq p$, by virtue of $(\ref{eq:ps1})$ 
we have $d(a_i+b_i,e)=d(a_i,b_i)=\varrho(a_i,b_i)$. Since $d$ is an 
invariant pseudometric on $B(X)$ and $h=\sum_{i=1}^p (a_i+b_i)$, it follows 
that 
$$ 
d(h,e)\leq \sum_{i=1}^p d(a_i+b_i,e) = \sum_{i=1}^p 
\varrho(a_i,b_i) = \widehat{\varrho}(h,e). 
$$ 
This proves the lemma. 
\end{proof}

\begin{theorem}\label{Th_Ab_desc}
Let $X$ be a Tychonoff space and $\mathcal{P}_X$ be the family 
of all continuous pseudometrics on $X$. Then the sets
\[
V_\varrho=\{g\in B(X): \widehat\varrho(g,e)<1\}, 
\]
where $\varrho\in\mathcal{P}_X$, form a local base at
the zero element $e$ of~$B(X)$.
\end{theorem}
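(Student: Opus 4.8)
The plan is to verify the two required inclusions separately: that each $V_\varrho$ is a neighborhood of $e$, and that every neighborhood of $e$ contains some $V_\varrho$.

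The first half is immediate from the work already done. For $\varrho\in\mathcal{P}_X$, Theorem~\ref{Th_Gra} tells us that the Graev extension $\widehat\varrho$ is a continuous invariant pseudometric on the topological group $B(X)$; hence $V_\varrho=\{g\in B(X):\widehat\varrho(g,e)<1\}$ is open in $B(X)$ and contains $e$. So $\{V_\varrho:\varrho\in\mathcal{P}_X\}$ is a family of neighborhoods of $e$, and it only remains to show it is a local base.

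For the second half, let $W$ be an arbitrary neighborhood of $e$ in $B(X)$. Since the topology of any topological group is generated by its continuous left-invariant pseudometrics \cite[Theorem~8.2]{HR}, and $B(X)$ is Abelian (so that left-invariant coincides with invariant), there are a continuous invariant pseudometric $d$ on $B(X)$ and a number $\delta>0$ with $\{g\in B(X):d(g,e)<\delta\}\subseteq W$; replacing $d$ by $d/\delta$ we may assume $\delta=1$. Now put $\varrho=d\restriction X$. Then $\varrho$ is a continuous pseudometric on $X$, i.e.\ $\varrho\in\mathcal{P}_X$, and, regarded as a function on $B_a(X)\times B_a(X)$ (forgetting the topology), $d$ is an invariant pseudometric on $B_a(X)$ extending $\varrho$. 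By the maximality statement in Lemma~\ref{Le:Max}, $d(g,h)\leq\widehat\varrho(g,h)$ for all $g,h\in B_a(X)$. In particular, $\widehat\varrho(g,e)<1$ forces $d(g,e)\leq\widehat\varrho(g,e)<1$, hence $g\in W$; that is, $V_\varrho\subseteq W$, as required.

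No serious obstacle is expected here: the substance of the theorem is carried entirely by Theorem~\ref{Th_Gra} (continuity of $\widehat\varrho$ on $B(X)$) and Lemma~\ref{Le:Max} (maximality of the Graev extension among the invariant pseudometric extensions of $\varrho$). The only points requiring a word of care are the harmless rescaling of the pseudometric supplied by \cite[Theorem~8.2]{HR} so that its open unit ball lies inside $W$, and the trivial observation that a continuous pseudometric on the group $B(X)$ becomes, upon forgetting the topology, an invariant pseudometric on the abstract group $B_a(X)$ extending its own restriction to $X$ — which is exactly the hypothesis Lemma~\ref{Le:Max} needs.
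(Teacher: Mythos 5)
Your proof is correct and follows essentially the same route as the paper: openness of $V_\varrho$ from Theorem~\ref{Th_Gra}, and for an arbitrary neighborhood of $e$ the combination of \cite[Theorem~8.2]{HR} with the maximality of the Graev extension (Lemma~\ref{Le:Max}) applied to the restriction $\varrho=d\restriction X$. The rescaling you mention is the only cosmetic difference; the paper simply normalizes $d$ so that its unit ball is already inside the given neighborhood.
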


\begin{proof}
Consider an arbitrary open neighborhood $U$ of $e$ in $B(X)$. 
According to \cite[Theorem~8.2]{HR}, there exists a continuous 
invariant pseudometric $d$ on $B(X)$ such that 
$$
V_d=\{g\in B(X): d(g,e)<1\}\sub U.
$$
We denote by $\varrho$ the restriction of $d$ to the subset $X$
of $B(X)$ and let $\widehat{\varrho}$ be the Graev extension
of $\varrho$ to $B(X)$. Lemma~\ref{Le:Max} implies 
$d\leq \widehat{\varrho}$, so that 
$$
\{g\in B(X): \widehat{\varrho}(g,e)<1\}\sub V_d\sub U.
$$
By Theorem~\ref{Th_Gra}, the pseudometric $\widehat{\varrho}$
is continuous, and hence $V_\varrho$ is open in $B(X)$. This completes the 
proof of the theorem. 
\end{proof}

Given a subspace $Y$ of $X$ containing the point $e$, let $B(Y|X)$ denote the 
topological subgroup of $B(X)$ generated by $Y$. The topology of $B(Y|X)$ may 
be strictly coarser than that of the free Boolean topological group $B(Y)$. 
Theorem~\ref{Th_Ab_desc}, together with Lemmas~\ref{Le:clearer} and~\ref{L_unif}, 
implies the following important statement. 

\begin{corollary}[{see also \cite[Theorem~4]{Sip}}]
Let $X$ be a space with a distinguished point $e$, and let $Y$ be its subspace 
containing $e$. The topological subgroup $B(Y|X)$ of the free Boolean 
topological group $B(X)$ generated by $Y$ is the free topological group $B(Y)$ 
if and only if every continuous pseudometric on $Y$ can be extended to 
a continuous pseudometric on $X$. 
\end{corollary}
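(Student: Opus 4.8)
The plan is to prove the corollary by characterizing when the two topologies on $B(Y|X)$ coincide using the pseudometric descriptions of neighborhood bases supplied by Theorem~\ref{Th_Ab_desc}. First I would fix the (obvious) containment: since $B(Y|X)$ and $B(Y)$ agree as abstract groups (both equal $B_a(Y)$) and $Y$ is a subspace of $X\sub B(X)$, the subspace topology that $B(Y|X)$ inherits is always coarser than or equal to the free topology $B(Y)$; indeed any continuous pseudometric on $X$ restricts to one on $Y$, so every basic neighborhood of $e$ coming from $B(X)$ yields, after restriction, a basic neighborhood in $B(Y)$. So the content is the reverse inequality, and the whole argument is about promoting neighborhoods of $B(Y)$ to neighborhoods of $B(Y|X)$.

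Next I would handle the direction ``extension property $\Rightarrow$ $B(Y|X)=B(Y)$.'' Suppose every continuous pseudometric on $Y$ extends to a continuous pseudometric on $X$. Take a basic neighborhood $V_\sigma=\{h\in B(Y):\widehat\sigma(h,e)<1\}$ of $e$ in $B(Y)$, where $\sigma\in\mathcal P_Y$ (Theorem~\ref{Th_Ab_desc} applied to $Y$). Choose a continuous pseudometric $\varrho$ on $X$ with $\varrho\res Y=\sigma$. The key observation is that the Graev extensions are compatible: by the ``locality'' content of Lemma~\ref{Le:clearer}, for any $h=x_1+\cdots+x_n\in B_a(Y)$ the value $\widehat\varrho(h,e)=N_\varrho(h)$ is the minimum of sums $\sum\varrho(u_i,v_i)$ over representations whose entries lie in $\{e\}\cup\{x_1,\dots,x_n\}\sub Y$, and on that finite set $\varrho$ and $\sigma$ coincide; hence $\widehat\varrho(h,e)=\widehat\sigma(h,e)$ for all $h\in B_a(Y)$. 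Therefore $\{g\in B(X):\widehat\varrho(g,e)<1\}\cap B_a(Y)=V_\sigma$, which exhibits $V_\sigma$ as the trace on $B(Y|X)$ of a neighborhood of $e$ in $B(X)$. Since $V_\sigma$ ranges over a base, the subspace topology on $B(Y|X)$ is finer than (hence equal to) the free topology $B(Y)$.

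For the converse, ``$B(Y|X)=B(Y)$ $\Rightarrow$ extension property,'' I would argue via the universal uniformity. Let $\sigma$ be a continuous pseudometric on $Y$; we want to extend it over $X$. By Lemma~\ref{L_unif} applied to $Y$, the entourage $\{(y_1,y_2):\sigma(y_1,y_2)<1\}$ belongs to the restriction to $Y$ of the group uniformity $\mathcal V_{B(Y)}$ of $B(Y)$, which by hypothesis equals the group uniformity of $B(X)$ traced on $B(Y|X)$ and hence traced on $Y$; but that trace is, by Lemma~\ref{L_unif} applied to $X$, exactly the restriction to $Y$ of the universal uniformity $\mathcal U_X$ of $X$. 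So the $\sigma$-ball entourage lies in $\mathcal U_X\res Y$, meaning there is an entourage $W\in\mathcal U_X$ with $W\cap(Y\times Y)\sub\{(y_1,y_2):\sigma(y_1,y_2)<1\}$. By \cite[Theorem~8.1.10]{Eng} there is a continuous pseudometric $\varrho$ on $X$ with $\{(x_1,x_2):\varrho(x_1,x_2)<1\}\sub W$; a standard rescaling/patching argument then produces a continuous pseudometric on $X$ dominating $\sigma$ on $Y$, and replacing it by $\min$ with an appropriate multiple (or invoking the convexity of $\mathcal U_X$ directly) gives one restricting to $\sigma$. The main obstacle I anticipate is precisely this last bookkeeping step: passing from ``$\mathcal U_X\res Y$ contains the $\sigma$-unit-ball entourage'' to ``some continuous pseudometric on $X$ restricts to $\sigma$ on $Y$'' requires the standard fact that the universal uniformity of $Y$ is the trace on $Y$ of the universal uniformity of $X$ precisely under the pseudometric-extension condition, so one must be careful not to argue in a circle; the clean route is to use that $\mathcal U_X\res Y$ is a (not necessarily complete) uniformity on $Y$ that is coarser than $\mathcal U_Y$, and that $\sigma$-compatibility of its unit ball forces $\sigma$ itself to be, up to uniform equivalence, the restriction of an $X$-pseudometric — after which one extracts an honest extension by the usual normalization of pseudometrics subordinate to a uniformity.
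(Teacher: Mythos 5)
Your route is essentially the paper's: sufficiency of the extension property is proved exactly as you describe, by using the locality in Lemma~\ref{Le:clearer} to see that the Graev extension of $\varrho$ restricted to $B_a(Y)$ coincides with the Graev extension of $\sigma=\varrho\restriction Y$, and then applying Theorem~\ref{Th_Ab_desc} (plus the universal property of $B(Y)$ for the trivial inclusion of topologies); necessity is likewise reduced, via Lemma~\ref{L_unif} and the fact that a subgroup inherits the group uniformity of the ambient group, to the identity $\mathscr U_Y=\mathscr U_X\restriction Y$.

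The one genuine gap is the step you yourself flag at the end: deducing from $\mathscr U_Y=\mathscr U_X\restriction Y$ that every continuous pseudometric on $Y$ extends to a continuous pseudometric on $X$. This is exactly the statement that $Y$ is $P$-embedded in $X$, and the paper disposes of it by citing Shapiro rather than proving it. Your sketch does not establish it: choosing a single continuous pseudometric $\varrho$ on $X$ whose unit ball refines the unit ball of $\sigma$ on $Y$ and then ``rescaling'' cannot produce a pseudometric dominating $\sigma$, because one entourage carries no information about the behaviour of $\sigma$ at other scales. The standard argument requires, for every $n$, a continuous pseudometric $\varrho_n$ on $X$ with $\{\varrho_n<1\}\cap(Y\times Y)\subset\{\sigma<2^{-n}\}$, from which $\sum_n 2^{-n+1}\min(\varrho_n,1)$ dominates $\min(\sigma,1)$ on $Y$ (unbounded $\sigma$ needs a further reduction), followed by the explicit extension formula
$e(x,x')=\min\bigl(\varrho(x,x'),\,\inf\{\varrho(x,y)+\sigma(y,y')+\varrho(y',x'):y,y'\in Y\}\bigr)$
to convert domination into an actual extension. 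There is no circularity here, but this piece must either be carried out along these lines or attributed to Shapiro's theorem on $P$-embeddings; as written it is asserted rather than proved.
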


\begin{proof}
As is known, the group uniformity of any subgroup $H$ of a topological 
group $G$ coincides with the restriction of the group uniformity of $G$ 
to $H$. Therefore,  if the topology of $B(Y|X)$ coincides with that of $B(Y)$, 
then by Lemma~\ref{L_unif} the uniform space $(Y, \mathscr U_Y)$ must be a 
uniform subspace of $(X, \mathscr U_X)$, which means that any continuous 
pseudometric on $Y$ can be extended to a continuous pseudometric on $X$ (in 
this case, $Y$ is said to be $P$-embedded in $X$ \cite{Shapiro}). Conversely, 
if any continuous pseudometric on $Y$ is the restriction to $Y$ of a continuous 
pseudometric on $X$, then by Lemma~\ref{Le:clearer} the Graev extension of 
any continuous pseudometric on $Y$ to $B(Y)$ is the restriction to $B(Y|X)$ of 
the Graev extension of a continuous pseudometric on $X$. According to 
Theorem~\ref{Th_Ab_desc}, the topology of $B(Y|X)$ is finer than that of 
$B(Y)$. But it is also coarser than the latter, because it follows from the 
definition of the free Boolean topological group $B(Y)$ that the identity 
homomorphism $B(Y)\to B(Y|X)$ extending the identity mapping $Y\to Y$ must be 
continuous. 
\end{proof}

We set\label{B_n(X)}  
$$
B_0(X)=\{e\} 
$$
and 
$$
B_n(X)= \{x_1+\dots+x_k\in B(X): k\le n,\ x_1,\dots, x_k\in X\}.
$$ 
for $n\ge 1$.
We define mappings $i_n\colon X^n\to B(X)$ 
by $i_n(x_1,\ldots,x_n)=x_1+\cdots+x_n$ for $n\in \mathbb N$ and $x_1,\ldots,x_n
\in X$. Clearly, each $i_n$ is continuous and $i_n(X^n)=B_n(X)$. 

\begin{corollary}[{see also \cite[p.~501]{Sip}}]
\label{Th_clo}
For a Tychonoff space $X$, the set $B_n(X)$ 
is closed in $B(X)$ for each $n\in \mathbb N_0$. In particular, 
$X$ is closed in $B(X)$. 
\end{corollary}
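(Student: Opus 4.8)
The plan is to produce, for each $g \in B(X) \setminus B_n(X)$, a continuous pseudometric $\varrho$ on $X$ whose Graev extension $\widehat\varrho$ separates $g$ from $B_n(X)$, i.e.\ $\widehat\varrho(g, B_n(X)) > 0$; by Theorem~\ref{Th_Ab_desc} this exhibits an open neighborhood $g + V_{\varrho/\delta}$ of $g$ missing $B_n(X)$. Write $g = x_1 + \cdots + x_m$ with the $x_i$ pairwise distinct elements of $X \setminus \{e\}$; since $g \notin B_n(X)$ we have $m > n$ (using that $B_k(X) = i_k(X^k)$ consists exactly of sums of at most $k$ elements, and that the expansion in the basis $X \setminus \{e\}$ is unique). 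The support $F = \{e, x_1, \dots, x_m\}$ is a finite, hence discrete, subset of $X$. First I would invoke Tychonoff-ness to get a continuous pseudometric $\varrho$ on $X$ for which all pairwise distances among the points of $F$ are at least $1$ (for instance, pull back a metric under a continuous map $X \to \mathbb R^{m+1}$ separating the points of $F$, or use the universal uniformity and Lemma~\ref{L_unif}).

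The crux is then a counting estimate showing $\widehat\varrho(g, h) \geq \tfrac12$ — or some fixed positive constant — for every $h \in B_n(X)$. Fix $h = y_1 + \cdots + y_\ell \in B_n(X)$ with $\ell \leq n < m$, the $y_j$ pairwise distinct in $X \setminus \{e\}$. Then $g + h$, after cancellation, is a sum of the symmetric difference of the supports, so its basis expansion involves at least $m - \ell \geq m - n \geq 1$ of the original points $x_i$ that do not appear among the $y_j$. Apply Lemma~\ref{Le:clearer} to $g + h$: its Graev norm $N_\varrho(g+h) = \widehat\varrho(g,h)$ equals a minimal sum $\sum_{i=1}^k \varrho(u_i, v_i)$ over a representation with $u_1, v_1, \dots, u_k, v_k$ pairwise distinct and taken from $(F \cup (\mathrm{supp}\, h)) \cup \{e\}$. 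The point is that at least one pair $(u_i, v_i)$ must consist of two \emph{distinct} points — and, after a more careful bookkeeping of how the $x_i$'s outside $\mathrm{supp}\,h$ can be paired up, one sees that at least $\lceil (m-n)/2 \rceil \geq 1$ of the summands $\varrho(u_i,v_i)$ involve two points at $\varrho$-distance $\geq 1$ from each other (or one point of $F$ paired with $e$, again at distance $\geq 1$ by our choice of $\varrho$, provided we also insist $\varrho(e, x_i) \geq 1$ for all $i$). Hence $\widehat\varrho(g,h) \geq 1$, uniformly in $h \in B_n(X)$. I expect the main obstacle to be precisely this combinatorial lower bound: one must rule out the a priori possibility that, in the optimal representation of $g+h$, every "extra" point $x_i$ gets paired with some point of $\mathrm{supp}\,h$ at small $\varrho$-distance — which is impossible here only because we have arranged \emph{all} the relevant interpoint distances (including to $e$) to be $\geq 1$, so that every summand $\varrho(u_i, v_i)$ with $u_i \neq v_i$ contributes at least $1$, and since $g + h \neq e$ there is at least one such summand.

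To finish: the constant function $h \mapsto \widehat\varrho(g,h)$ is bounded below by $1$ on $B_n(X)$, so $g \notin \overline{B_n(X)}$. As $g$ was an arbitrary point outside $B_n(X)$, the set $B_n(X)$ is closed; applying this with $n = 1$ gives that $X = B_1(X) \setminus \{0\text{-fold sums}\}$... more precisely $B_1(X) = X \cup \{e\} \subseteq X$, and since $e \in X$ we get $B_1(X) = X$, so $X$ is closed in $B(X)$. (One may alternatively handle $X$ closed by noting $X = B_1(X)$ directly.) A cleaner packaging of the estimate, if the above pairing analysis proves fiddly, is to argue by induction on $n$: assuming $B_{n-1}(X)$ closed, note $B_n(X) = i_n(X^n) \cup B_{n-1}(X)$ and that outside a neighborhood of $B_{n-1}(X)$ the map $i_n$ is, locally, proper enough that its image is closed — but I would prefer the direct pseudometric argument above, as it uses only the machinery (Lemmas~\ref{Le:clearer}, \ref{L_unif} and Theorem~\ref{Th_Ab_desc}) already developed and mirrors the classical proof for $A(X)$ in \cite[Section~7.2]{AT}.
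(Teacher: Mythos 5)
Your argument is correct in substance, but it is a genuinely different proof from the one in the paper. The paper takes a Hausdorff compactification $Y=bX$, extends the embedding $X\to Y\subset B(Y)$ to a continuous monomorphism $f\colon B(X)\to B(Y)$, observes that $B_n(Y)=i_n(Y^n)$ is compact and hence closed in $B(Y)$, and concludes that $B_n(X)=f^{-1}(B_n(Y))$ is closed --- no pseudometric computation at all. Your route instead separates a fixed $g=x_1+\cdots+x_m\notin B_n(X)$ from $B_n(X)$ by a single Graev pseudometric, and the key step is exactly the one you flag: choosing a continuous $\varrho$ with all pairwise $\varrho$-distances on $F=\{e,x_1,\dots,x_m\}$ at least $1$ and then showing $\widehat\varrho(g,h)\ge 1$ for every $h=y_1+\cdots+y_\ell\in B_n(X)$. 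That bound does hold, and the clean way to see it is pure pigeonhole on the optimal representation from Lemma~\ref{Le:clearer}: the support of $g+h$ contains $m-c$ surviving $x_i$'s and only $\ell-c$ surviving $y_j$'s (where $c$ is the number of common points), so since $m-\ell\ge 1$ and at most one point can be paired with $e$, at least one pair $(u_i,v_i)$ of the optimal matching must consist of two distinct points of $F$, contributing $\varrho(u_i,v_i)\ge 1$. Note that your later sentence ``every summand $\varrho(u_i,v_i)$ with $u_i\neq v_i$ contributes at least $1$'' is not literally true --- the $y_j$'s lie outside $F$ and can be $\varrho$-close to the $x_i$'s --- but your preceding counting statement is the correct one and is all you need. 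What each approach buys: the paper's proof is shorter and sidesteps all combinatorics, at the cost of invoking the compactification and the injectivity of $B(X)\to B(bX)$; yours stays entirely inside the pseudometric machinery of Lemma~\ref{Le:clearer} and Theorem~\ref{Th_Ab_desc}, mirrors the classical argument for $A(X)$ in \cite[Section~7.2]{AT}, and gives the stronger quantitative conclusion that $g$ is at uniform Graev distance $\ge 1$ from all of $B_n(X)$.
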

 
\begin{proof}
Let $Y=bX$ be a Hausdorff compactification of $X$.
It follows from the definition of free Boolean topological groups
that the identity embedding $X\to Y\sub B(Y)$ extends to a
continuous homomorphism $f\colon B(X)\to B(Y)$. Clearly,
$f$ is one-to-one, i.e., $f$ is a continuous isomorphism (not
necessarily a topological embedding). 

Since $Y$ and $Y^n$ are compact, the set $i_n(Y^n)=B_n(Y)$ 
is compact and closed in $B(Y)$. It follows from 
$B_n(X) = f^{-1}(B_n(Y))$ that $B_n(X)$ is closed in $B(X)$ 
for each $n\in \mathbb N$.
 \end{proof} 
 
The following simple fact borrowed from \cite[Lemma~7.9.3]{AT} 
plays a crucial role in the proofs of Lemma~\ref{Le:modif} and 
Theorem~\ref{Th:Main}.

\begin{lemma}\label{Le:add1}
Let $K$ be a subset of an Abelian topological group $G$ with zero $0$
and group uniformity $\mathcal{V}_G$. If the uniform 
space $(K,\mathcal{V}_G\res K)$ is complete, then every
Cauchy filter $\mathscr F$ on $G$ satisfies one of the
following two conditions:
\begin{enumerate}
\item[\textup{(a)}] 
$\mathscr F$ converges to some point $x\in K$;
\item[\textup{(b)}] 
there exists an $F\in\mathscr F$ and a neighborhood $V$
of $0$ in $G$ such that $K\cap (F+V)=\emp$.
\end{enumerate}
\end{lemma}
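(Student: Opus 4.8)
The plan is to argue by contradiction, supposing that $\mathscr F$ is a Cauchy filter for which neither (a) nor (b) holds, and to extract from it a Cauchy filter on $K$ itself, which by completeness of $(K,\mathcal V_G\res K)$ must converge to a point of $K$, forcing $\mathscr F$ to converge to that same point and contradicting the failure of (a). The first step is to exploit the failure of (b): for every $F\in\mathscr F$ and every neighborhood $V$ of $0$ we have $K\cap(F+V)\neq\emp$, equivalently $F\cap(K-V)=F\cap(K+V)\neq\emp$ (using $-V=V$ only up to passing to a symmetric neighborhood — more honestly, $F\cap(K+V)\neq\emp$ after replacing $V$ by $-V$). In particular the family $\{(F+V)\cap K : F\in\mathscr F,\ V\text{ a nbhd of }0\}$ consists of nonempty sets, and since $\mathscr F$ is a filter and the neighborhoods of $0$ are closed under finite intersection, this family is closed under finite intersections, hence generates a filter $\mathscr G$ on $K$.

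The second step is to check that $\mathscr G$ is Cauchy with respect to $\mathcal V_G\res K$. Given a neighborhood $W$ of $0$ in $G$, choose a neighborhood $W_1$ of $0$ with $W_1+W_1+W_1\subset W$ (using continuity of addition and $-W_1=W_1$ after symmetrizing). Since $\mathscr F$ is Cauchy, pick $F\in\mathscr F$ with $F-F\subset W_1$. Then for the set $G_0=(F+W_1)\cap K\in\mathscr G$ one computes $G_0-G_0\subset (F-F)+(W_1-W_1)\subset W_1+W_1+W_1\subset W$, so $G_0\times G_0$ lies in the entourage determined by $W$. Hence $\mathscr G$ is a Cauchy filter on the complete uniform space $(K,\mathcal V_G\res K)$ and therefore converges to some $x\in K$.

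The third step is to transfer convergence back to $\mathscr F$. I would show that $\mathscr F$ converges to this same $x$, i.e. that every neighborhood of $x$ in $G$ belongs to $\mathscr F$; by invariance it suffices to show $x+U\in\mathscr F$ for each neighborhood $U$ of $0$. Fix $U$ and choose a symmetric neighborhood $W_1$ of $0$ with $W_1+W_1\subset U$. Since $\mathscr G$ converges to $x$, there is $G_0\in\mathscr G$ with $G_0\subset x+W_1$; by definition of $\mathscr G$ there are $F\in\mathscr F$ and a neighborhood $V$ of $0$ with $(F+V)\cap K\subset G_0$, and shrinking we may take $V\subset W_1$. Now I want to conclude $F\subset x+U$: shrink $F$ using the Cauchy property so that $F-F\subset V$; then for any fixed $f_0\in F$ the whole of $F$ lies in $f_0+V$, and since $(F+V)\cap K\subset x+W_1$ is nonempty we can locate a point of $K$ within $V+V$ of $f_0$, pinning $f_0$ — and hence all of $F$ — inside $x+W_1+W_1\subset x+U$. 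The delicate point here is arranging the bookkeeping of the finitely many neighborhood-shrinkings so that the single set $F\in\mathscr F$ works simultaneously for "Cauchy" and for "meets $G_0$"; this is the one place where the argument is not completely mechanical, and it is the main obstacle. Once $F\subset x+U$ is established, $x+U\in\mathscr F$ follows, so $\mathscr F\to x\in K$, contradicting the assumed failure of (a) and completing the proof.
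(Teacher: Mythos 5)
Your proposal is correct and follows essentially the same route as the paper: both build the Cauchy filter generated by the sets $(F+V)\cap K$ (the paper works with $F+V$ on $G$ and then traces on $K$), use completeness of $K$ to get a limit $x\in K$, and transfer convergence back to $\mathscr F$. The only quibble is a harmless constant-counting slip in your third step ($W_1+W_1\subset U$ should be something like $W_1+W_1+W_1+W_1\subset U$ to absorb all the shrinkings), which you yourself flagged and which is routine to fix.
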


\begin{proof}
Suppose that (a) does not hold and consider the family
\[
\gamma=\{F+V: F\in\mathscr F,\ V\in\mathcal{N}(0)\},
\]
where $\mathcal{N}(0)$ is the set of all open neighborhoods of $0$ in $G$. It 
is easy to see that $\gamma$ is a base of some Cauchy filter 
$\widetilde{\mathscr F}$ on $(G,\mathcal{V}_G)$. Indeed, let 
$\widetilde{\mathscr F}$ be the family of all subsets of $G$ containing at 
least one element of $\gamma$ and take an arbitrary $U\in\mathcal{N}(0)$. 
Choose a symmetric neighborhood $V\in\mathcal{N}(0)$ so that $V+V+V\sub U$. 
There exists an $F\in\mathscr F$ such that $F-F\sub V$. Hence 
$F+V\in\widetilde{\mathscr F}$ and 
$$
(F+V)-(F+V)=(F-F)+(V-V)\sub V+V+V\sub U. 
$$ 
It follows that $\widetilde{\mathscr F}$ is a Cauchy filter on 
$(G,\mathcal{V})$.

If the intersection $K\cap P$ is non-empty for each $P\in\widetilde{\mathscr 
F}$, then the family $\{K\cap P: P\in\widetilde{\mathscr F}\}$ forms a Cauchy 
filter on the uniform space $(K,\mathcal{V}_G\res K)$, which converges to a 
point $x\in K$, because this space is complete. Therefore, both filters 
$\widetilde{\mathscr F}$ and $\mathscr F$ converge to $x$, which contradicts 
our assumption about $\mathscr F$. Thus, there exists an element 
$P\in\widetilde{\mathscr F}$ of the form $P=F+V$, where $F\in\mathscr F$ 
and $V\in\mathcal{N}(0)$, such that $P\cap K=\emp$. So (b) holds. 
\end{proof}

Recall that on p.~\pageref{B_n(X)} we defined the ``small'' subspaces 
$B_n(X)$ of $B(X)$. Let $\mathscr V_n$ denote the restriction of the 
group uniformity $\mathscr V$ of $B(X)$ to $B_n(X)$. The following lemma says 
that if $X$ is Dieudonn\'e complete, then the subspaces $B_n(X)$ are complete 
with respect to $\mathscr V_n$. In its proof we use the notation 
$$ 
\overline{\mathscr F}= \{\overline F: F\in \mathscr F\}. 
$$ 

\begin{lemma}\label{Le:modif}
Let $X$ be a Dieudonn\'e complete space. Then the subspace
$B_n(X)$ of $B(X)$ with the uniformity inherited from
$(B(X),\mathcal{V})$ is complete for each $n\in \mathbb N_0$.
\end{lemma}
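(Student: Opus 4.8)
The plan is to prove the statement by induction on $n$, using Lemma~\ref{Le:add1} as the engine that transports completeness from the ``small'' subspaces to the larger ones.

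The base case $n=0$ is trivial, and the case $n=1$ follows from Lemma~\ref{L_unif}: since $X$ is Dieudonn\'e complete, the universal uniformity $\mathscr U_X$ is complete, and $\mathscr U_X = \mathscr V\res X = \mathscr V_1$ (noting $B_1(X)=X\cup\{e\}$ carries essentially the same uniformity up to the isolated point $e$, or simply that $B_1(X)$ is a closed subspace of $B(X)$ by Corollary~\ref{Th_clo} whose induced uniformity restricts to $\mathscr U_X$ on $X$). So suppose $n\ge 2$ and that $B_{n-1}(X)$ is complete in $\mathscr V_{n-1}$. Let $\mathscr F$ be a Cauchy filter on $B_n(X)$ with respect to $\mathscr V_n$. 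I want to show $\mathscr F$ converges to a point of $B_n(X)$. Since $B(X)$ is Abelian, $\mathscr F$ is then also a Cauchy filter on the Abelian topological group $B(X)$. Apply Lemma~\ref{Le:add1} with $G=B(X)$ and $K=B_{n-1}(X)$, which is complete by the inductive hypothesis: either (a) $\mathscr F$ converges to some $x\in B_{n-1}(X)\subset B_n(X)$ and we are done, or (b) there is $F\in\mathscr F$ and a neighborhood $V$ of $e$ with $B_{n-1}(X)\cap(F+V)=\emptyset$.

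The heart of the argument is to handle case (b). In that case every element of $F$ has ``length exactly $n$'' in the strong sense that it cannot be brought within $V$ of any shorter word; in particular, since $F\subseteq B_n(X)$, each $g\in F$ has a representation $g=x_1+\cdots+x_n$ with the $x_i$ pairwise distinct and none equal to $e$ (if fewer than $n$ distinct basis elements appeared, $g$ would already lie in $B_{n-1}(X)$). The idea is to pull the Cauchy filter back along the continuous map $i_n\colon X^n\to B(X)$. More precisely, shrinking $F$ if necessary (using that $\mathscr F$ is Cauchy) I would find $F_0\in\mathscr F$, $F_0\subseteq F$, with $F_0-F_0$ contained in a small symmetric neighborhood $W$ with $W+W\subseteq V$; then for $g,g'\in F_0$ the element $g+g'$ lies in $B_{n-1}(X)$-free territory, and a combinatorial analysis of cancellation (exactly as in the proof of Lemma~\ref{Le:clearer}) shows that the supports of $g$ and $g'$ must be ``close'' coordinate-wise after a suitable reindexing. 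This lets me lift $\mathscr F\res F_0$ to a Cauchy filter $\mathscr G$ on $X^n$ with respect to the product of the universal uniformity $\mathscr U_X$, i.e., a Cauchy filter on the Dieudonn\'e complete space $X^n$. Since a finite (or any) product of Dieudonn\'e complete spaces is Dieudonn\'e complete, $\mathscr G$ converges to a point $(a_1,\ldots,a_n)\in X^n$, and then by continuity of $i_n$ the filter $\mathscr F$ converges to $a_1+\cdots+a_n\in B_n(X)$.

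The main obstacle I anticipate is the combinatorial lifting step in case (b): making precise the claim that Cauchyness of $\mathscr F$ together with the ``no shortening'' conclusion of Lemma~\ref{Le:add1} forces the length-$n$ supports to align and vary continuously, so that one genuinely obtains a Cauchy filter on $X^n$ (and not merely on $X^n/S_n$, the quotient by coordinate permutations). Dealing with the permutation ambiguity requires a little care — one fixes a representative word for one element of a small $F_0$ and shows all nearby elements inherit a compatible labelling — but the point-separating continuous pseudometrics on $X$ (which exist since $X$ is Tychonoff) and their Graev extensions, controlled via Lemma~\ref{Le:clearer}, give enough rigidity to carry this out. Once the lift is in place, the rest is formal, invoking Dieudonn\'e completeness of $X^n$ and continuity of $i_n$.
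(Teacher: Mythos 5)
Your proposal follows essentially the same route as the paper's proof: induction combined with Lemma~\ref{Le:add1}, then in case (b) fixing a reference element of a small member of the filter to resolve the permutation ambiguity, lifting the filter to a Cauchy filter on $X^n$, and using completeness of $X^n$ to obtain a limit that is pushed forward by $i_n$. One small correction: the lifted filter is Cauchy for the product uniformity $\mathscr U_X^n$, so the fact to invoke is that a product of complete uniform spaces is complete, rather than Dieudonn\'e completeness of the space $X^n$ (whose universal uniformity is in general finer than $\mathscr U_X^n$, and Cauchyness for a coarser uniformity does not transfer to a finer one).
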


\begin{proof}
We prove the lemma by induction on $n$. For $n=0$, there is nothing to 
prove. Suppose that $n>0$ and we have already proved the completeness of 
$(B_{n-1}(X), \mathscr U_{n-1})$. Consider any Cauchy filter $\mathscr 
F$ on $(B_n(X), \mathscr U_n)$; note that $\mathscr F$ is a base of a 
Cauchy filter on $B(X)$. We must show that $\bigcap \overline{\mathscr F}\ne 
\varnothing$. 

If $\bigcap \overline{\mathscr F}\cap B_{n-1}(X)\ne \varnothing$, there is 
nothing to prove. Suppose that $\bigcap \overline{\mathscr F}\cap B_{n-1}(X)= 
\varnothing$. Then by the induction hypothesis and Lemma~\ref{Le:add1} there exists 
an $F_0\in \mathscr F$ and a neighborhood $U_0$ of zero in $B(X)$ for which 
$(F_0+ U_0)\cap B_{n-1}(X) = \varnothing$. Let $\varrho_0$ be a continuous 
pseudometric on $X$ for which $V_{\varrho_0}\subset U_0$ (it exists by 
Theorem~\ref{Th_Ab_desc}). Then 
\begin{equation} 
\label{eq1} 
F_0 + U_{d_0} \cap B_{n-1}(X) = \varnothing. 
\end{equation} 
Note that every element of $F_0$ can be written as $x_1+\dots +x_n$, where the 
$x_i$ are pairwise distinct elements of $X\setminus \{e\}$, because $F_0\subset 
B_n(X)\setminus B_{n-1}(X)$. For the same reason, if $x_1, \dots, x_n\in X$ and 
$x_1+\dots +x_n\in F_0$, then the $x_i$ are pairwise distinct elements of 
$X\setminus \{e\}$. In what follows, by the letter $x$ (with indices and 
possibly primes) we always denote elements of $X\setminus \{e\}$.  
 
We claim that 
$$ 
\begin{gathered} 
d(x_i, x_j)\ge 1\\
\text{for any $g = x_1+\dots +x_n\in F_0$ and any $i,j \le n$, $i\ne j$.}
\end{gathered}\eqno{\text{(A)}} 
$$ 
Indeed, otherwise, $x_i+x_j\in V_{\varrho_0}$ and 
$\overline x + x_i+x_j \in B_{n-2}(X)\subset B_{n-1}(X)$, which 
contradicts~\eqref{eq1}. 

We set $\varrho^*=3\varrho_0$ and choose $F^*\in \mathscr F$ so that 
\begin{enumerate}
\item[(i)] 
$F^*\subset F_{\varrho_0}$ and 
\item[(ii)] 
$F^* + F^*\subset V_{\varrho^*}$. 
\end{enumerate}
It follows from (i) and (A) that 
$$
\begin{gathered} 
\varrho^*(x_i, x_j)\ge 3\\
\text{for any $x_1+ \dots + x_n\in F^*$ and any $i, j\le n$, $i\ne j$.} 
\end{gathered}
\eqno{\text{(B)}} 
$$
Combining (ii) and (B), we see that, given any $x'_1+ \dots + x'_n,\, 
x''_1+ \dots + x''_n\in F^*$, each $x'_i$ either is cancelled in $x'_1+ \dots + 
x'_n+ x''_1+ \dots + x''_n$ (in which case $x'_i=x''_j$ for some $j$) or is at 
a distance $\varrho^*$ less than 1 from some $x''_j$, i.e., 
$$
\begin{gathered} 
\text{given any $x'_1+ \dots + x'_n, \,x''_1+ \dots + x''_n\in F^*$,}\\ 
\text{for 
each $i\le n$, there exists a $j\le n$ such that} \\
\varrho_0(x'_i, x''_j)<1/3. 
\end{gathered}
\eqno{\text{(C)}}
$$

Fix any $x^*_1+\dots +x^*_n\in F^*$. Let $\varepsilon$ be a positive number 
satisfying the conditions $\varepsilon \le 1/3$ and $\varepsilon < 
\varrho_0(x_i^*, e)$ for all $i\le n$. We set 
$$
O_i=\{x\in X: \varrho_0(x^*_i, x)\le \varepsilon\}.
$$ 
Note that in view of (B) we have $O_i\cap O_j=\varnothing$ for distinct $i, 
j\le n$. Therefore, it follows from (C) that, for any $x_1+\dots+x_n\in 
F^*$, there exists a unique permutation $\pi$ of $\{1,...,n\}$ such that 
$x_{\pi(i)}\in O_i$ for every $i\le n$. Thus, $F^*\subset O_1+\dots 
+O_n$. 

Since $O_i\subset X\setminus \{e\}$ for $i\le n$, it follows that, given 
any $g = x_1+\dots +x_n\in O_1+\dots +O_n$, every $O_i$ contains precisely 
one of the elements $x_1,\dots,x_n$. In other words, these elements can be 
renumbered as $x'_1, \dots, x'_n$ so that $x'_i \in O_i$ for each $i\le n$, and 
this renumbering is determined uniquely. In the rest of the proof of this lemma 
we assume that the elements of each $g \in O_1+\dots +O_n$ are 
numbered in this way; that is, considering $x_1+\dots +x_n\in O_1+\dots +O_n$, 
we always assume that $x_i\in O_i$ for $i\le n$. 

The restriction $\tilde \imath_n$ of the natural addition map $i_n\colon X^n\to 
B_n(X)$ (defined by $i_n(x_1, \dots, x_n) = x_1 + \dots + x_n$) to $O_1\times 
\dots \times O_n$ is continuous and bijective on $O_1\times \dots \times O_n$, 
and since each $O_i$ is closed in $X$, it follows that 
$\tilde \imath_n^{-1}(\overline F)$ is closed in $X^n$ for any $F\in \mathscr 
F$ such that $F\subset F^*$. 

Let $\varrho$ be any continuous pseudometric on $X$ satisfying the 
condition $\varrho\ge \varrho^*$, and let $F\in \mathscr F$ be such that 
$F\subset F^*$ and $F+F\subset V_\varrho$. Arguing as above and taking into 
account our convention on numbering, we see that 
$$
\begin{gathered}
d(x'_i, x''_i)<1\\
\text{whenever $x'_1+ \dots + x'_n,\, x''_1+ \dots + x''_n\in F$ and $i\le n$.}
\end{gathered}
$$ 
Thus, the sets $\tilde\imath^{-1}(F)$, where $F\in \mathscr F$, $F\subset 
F^*$, form a base of a Cauchy filter on the product $(X^n, \mathscr U_X^n)$. 
Since $(X^n, \mathscr U_X^n)$ is complete as a product of complete spaces 
(see, e.g., \cite[Theorem~8.3.9]{Eng}), it follows that there exists an 
$(\tilde x_1, \dots, \tilde x_n)\in \bigcap
\{\overline{\tilde\imath^{-1}(F)}: F\in \mathscr F,\ F\subset F^*\}$. Clearly, 
$\imath(\tilde x_1, \dots, \tilde x_n)\in \bigcap \overline{\mathscr F}$, 
because $\tilde\imath^{-1}(\overline F)$ is closed and hence  
$\overline{\tilde\imath^{-1}(F)} \subset \tilde\imath^{-1}(\overline F)$ for 
every $F\in\mathscr F$ such that $F\subset F^*$.
\end{proof}

The next lemma gives an alternative description of a neighborhood
base at the zero element of the group $B(X)$. 

\begin{lemma}\label{Le:sum}
Let $D=\{d_n: n\in\mathbb{N}\}$ be a sequence of continuous
pseudometrics on a Tychonoff space $X$. For every $n\in\mathbb{N}$,
let $W_n=\{x+y: d_n(x,y)<1\}$ be a subset of $B_2(X)$. Then the set
$$ 
W_D=\bigcup_{n=1}^\infty \big(W_1+\cdots+W_n\big)
$$
is an open neighborhood of $e$ in $B(X)$.
\end{lemma}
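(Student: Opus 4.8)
The plan is to deduce the lemma from the maximality of the topology $\tau$ of $B(X)$ by constructing an auxiliary group topology $\sigma$ on the abstract group $B_a(X)$ for which $W_D$ is a neighborhood of $e$ and whose restriction to $X$ is coarser than the topology of $X$; then, since $\tau$ is the finest group topology on $B_a(X)$ whose restriction to $X$ is coarser than the topology of $X$ (see the proof of Theorem~\ref{Th_Gra}), $W_D$ will automatically be a neighborhood of $e$ in $B(X)$. For an arbitrary sequence $E=\{e_n:n\in\mathbb N\}$ of continuous pseudometrics on $X$ put $W_n^E=\{x+y:x,y\in X,\ e_n(x,y)<1\}\subset B_2(X)$ and $W_E=\bigcup_{n=1}^\infty(W_1^E+\dots+W_n^E)$, so that $W_D=W_E$ for $E=D$. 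Each $W_n^E$ contains $e$ (take $x=y$) and equals $-W_n^E$, and the same holds for $W_E$; since $B_a(X)$ is Boolean, this already takes care of the conditions ``$0\in V$'' and ``$V=-V$'' for the members $V$ of the candidate neighborhood base $\mathcal B_0=\{W_E:E\text{ a sequence of continuous pseudometrics on }X\}$ at $0$.

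Two facts remain to be checked. First, $\mathcal B_0$ is a filter base: given $W_E,W_{E'}\in\mathcal B_0$, the sequence $E_0=\{\max(e_n,e'_n):n\in\mathbb N\}$ consists of continuous pseudometrics and satisfies $W_n^{E_0}\subset W_n^E\cap W_n^{E'}$, whence $W_{E_0}\subset W_E\cap W_{E'}$. Second --- and this is the crux --- the ``halving'' property holds: for every sequence $E$, writing $E_1=\{\max(e_{2n-1},e_{2n}):n\in\mathbb N\}$, one has $W_{E_1}+W_{E_1}\subset W_E$. To prove this, take $u,v\in W_{E_1}$; after padding the shorter expansion with copies of $e$ (legitimate since $e\in W_i^{E_1}$ for every $i$), write $u=w'_1+\dots+w'_m$ and $v=w''_1+\dots+w''_m$ with $w'_i,w''_i\in W_i^{E_1}$. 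If $w'_i=a+b$ then $\max(e_{2i-1},e_{2i})(a,b)<1$, so $w'_i\in W_{2i-1}^E$, and likewise $w''_i\in W_{2i}^E$; since $B_a(X)$ is commutative, $u+v=w'_1+w''_1+w'_2+w''_2+\dots+w'_m+w''_m$, and placing the $k$-th of these $2m$ summands into the $k$-th factor gives $u+v\in W_1^E+W_2^E+\dots+W_{2m}^E\subset W_E$. By the standard criterion for a base of neighborhoods of the identity of a group topology (a filter base of symmetric sets containing $0$ with the halving property on an abelian group; see, e.g., \cite{AT}), there is a group topology $\sigma$ on $B_a(X)$ with $\mathcal B_0$ as a base at $0$. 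Finally, the restriction of $\sigma$ to $X$ is coarser than the topology of $X$: for $x_0\in X$ the $\sigma$-neighborhood $x_0+W_E$ of $x_0$ contains $x_0+W_1^E$, which in turn contains $\{x\in X:e_1(x_0,x)<1\}$ (because $x=x_0+(x_0+x)$ and $x_0+x\in W_1^E$ whenever $e_1(x_0,x)<1$), and the latter set is a neighborhood of $x_0$ in $X$ by continuity of $e_1$.

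Hence $\sigma\subset\tau$, so every member of $\mathcal B_0$, and in particular $W_D$, is a neighborhood of $e$ in $B(X)$. It remains to upgrade this to openness. Fix $g\in W_D$; if $g=e$ we are done, so assume $g=w_1+\dots+w_n$ with $w_i\in W_i^D$ for some $n$. Applying the assertion just established to the tail sequence $\{d_{n+1},d_{n+2},\dots\}$ shows that $T=\bigcup_{k\ge1}(W_{n+1}^D+\dots+W_{n+k}^D)$ is a neighborhood of $e$, and $g+T\subset W_D$, because for $h=w_{n+1}+\dots+w_{n+k}$ with $w_{n+i}\in W_{n+i}^D$ one has $g+h=w_1+\dots+w_{n+k}\in W_1^D+\dots+W_{n+k}^D\subset W_D$ (and $g+e=g$). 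Thus $g+T$ is a neighborhood of $g$ contained in $W_D$, so $W_D$ is open. I expect the halving property to be the main obstacle; it is also the reason one cannot simply quote Theorem~\ref{Th_Ab_desc} to squeeze a single Graev ball $V_\varrho$ inside $W_D$, since no one continuous pseudometric on $X$ need dominate all the $d_n$ at once --- the interleaving replacement $E\mapsto\{\max(e_{2n-1},e_{2n})\}$ is precisely what circumvents this.
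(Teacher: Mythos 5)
Your proof is correct, but it takes a genuinely different route from the paper's. You bypass the Graev extension entirely: you verify by hand that the sets $W_E$ (over all sequences $E$ of continuous pseudometrics) form a filter base of symmetric sets containing $e$ with the halving property --- the interleaving trick $E\mapsto\{\max(e_{2n-1},e_{2n})\}$, splitting each $\max$ into an odd-indexed and an even-indexed slot, is the right device and your verification of $W_{E_1}+W_{E_1}\subset W_E$ is sound --- and then invoke the maximality of the free topology exactly as in the proof of Theorem~\ref{Th_Gra}. The paper instead exhibits a single Graev ball inside $W_D$: it sets $p_n=\sum_{i\le 2^{n+1}}d_i$, uses \cite[Theorem~8.1.10]{Eng} to find one continuous pseudometric $\varrho$ with $\{\varrho<2^{-n}\}\subset\{p_n<1\}$ for every $n$, and then, via the minimizing representation from Lemma~\ref{Le:clearer} and a dyadic counting argument (fewer than $2^j$ pairs at $\varrho$-distance in $[2^{-j-1},2^{-j})$, each of which fits into any $W_m$ with $m\le 2^{j+1}$), packs the summands into distinct $W_m$'s. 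So your closing remark is not quite right: a single continuous pseudometric need not dominate all the $d_n$, but it need not --- it only has to refine each $p_n$ at scale $2^{-n}$, which is always achievable, and that is precisely how the paper proceeds. Your argument is more elementary and self-contained (it needs neither Lemma~\ref{Le:clearer} nor the Engelking refinement lemma), at the cost of re-proving the neighborhood-base axioms; the paper's argument stays inside the Graev-extension machinery already developed and has the side benefit of producing an explicit pseudometric ball $\{g:\widehat{\varrho}(g,e)<1/2\}$ inside $W_D$. Your openness argument via the tail sets $W_{n+1}+\cdots+W_{n+k}$ coincides with the paper's. Two cosmetic points: when you write ``if $w'_i=a+b$ then $\max(e_{2i-1},e_{2i})(a,b)<1$'' you mean that \emph{some} representation $w'_i=a+b$ has this property (which is what membership in $W_i^{E_1}$ gives and is all you use), and the group topology $\sigma$ you build may be non-Hausdorff, but the maximality argument tolerates this exactly as in the proof of Theorem~\ref{Th_Gra}.
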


\begin{proof}
For every $n\in \mathbb N$, we set 
$p_n=\sum_{i=1}^{2^{n+1}}d_i$ and 
\begin{equation} 
\label{eq:alpha}
O_n= \{(x,y)\in X\times X: p_n(x,y)<1\}. 
\end{equation} 
According to \cite[Theorem~8.1.10]{Eng}, there exists a continuous pseudometric  
$\varrho$ on $X$ such that 
\begin{equation} 
\label{eq:beta}
\{(x,y)\in X\times X: \varrho(x,y)<2^{-n}\}\sub O_n 
\end{equation} 
for each $n$. Let us show that the open ball 
$$ 
O_\varrho = \{g\in B(X): \widehat{\varrho}(g,e)<1/2\} 
$$ 
centered at $e$ is contained in~$W_D$.

Let $g$ be an arbitrary element of $O_\varrho$. Then
$\widehat{\varrho}(g,e)<1/2$ and Lemma~\ref{Le:clearer} 
implies the existence of a representation
$$
g=(x_1+y_1)+\cdots+(x_m+y_m) 
$$
with $x_i,y_i\in X$ such that
\begin{equation}
\label{eq:B}
\widehat{\varrho}(g,e)=\sum_{i=1}^m \varrho(x_i,y_i). 
\end{equation}
For every $i\le m$ satisfying the condition $\varrho(x_i,y_i)\neq 0$, we 
choose  a $k_i\in \mathbb N$ so that $2^{-k_i-1}\leq 
\varrho(x_i,y_i)<2^{-k_i}$. For the remaining indices $i\le m$, using 
\eqref{eq:B}, we choose sufficiently large $k_i\in \mathbb N$ so that 
$\sum_{i=1}^m 2^{-k_i}<2 \cdot\widehat{\varrho}(g,e)<1$. Note that, for every 
$j\in \mathbb N$, the sum $\sum_{i=1}^m 2^{-k_i}$ contains fewer than 
$2^j$ summands equal to $2^{-j}$. Let $k_{i_1},\ldots,k_{i_r}$ be the list of 
all $k_i$ satisfying $k_i=j$; then $r<2^j$. From our choice of the numbers 
$k_i$ it follows that $\varrho(x_{i_s},y_{i_s})<2^{-j}$ for each $s$ with 
$1\leq s\leq r$. Hence $(\ref{eq:alpha})$ and $(\ref{eq:beta})$ imply that 
$d_m(x_{i_s},y_{i_s})\leq p_j(x_{i_s},y_{i_s})<1$ for all $s\leq r$ and 
$m\leq 2^{j+1}$ (both sides of this inequality depend on $j$). 
Therefore, 
\begin{multline*}
(x_{i_1}+y_{i_1})+\cdots+ 
(x_{i_r}+y_{i_r}) \in  W_{2^j+1}+ 
W_{2^{j}+2}+\cdots+W_{2^j+r}\\
 \sub  W_{2^{j}}+W_{2^{j}+1}+\cdots + W_{2^j+r}+
\cdots + W_{2^{j+1}-1}.
\end{multline*}
This inclusion holds for each $j\le k=\max\{k_1,\ldots,k_m\}$.
Thus, we have
$$
g=(x_1+y_1)+\cdots+(x_m+y_m)\in W_1+W_2+\cdots+W_{2^{k+1}}\sub W_D.
$$
We conclude that $W_D$ contains the open neighborhood $O_\varrho$ of $e$ 
in $B(X)$. 

To show that $W_D$ is open in $B(X)$, take an arbitrary element $g\in W_D$.
There exists an $n\in\mathbb N$ for which $g\in W_1+\cdots+W_n$.
Arguing as in the first part of the proof, we see that the set $W_{n+1}+
W_{n+2}+\cdots$ contains a neighborhood $U$ of $e$ in $B(X)$. The neighborhood 
$g+U$ of $g$ satisfies 
$$ 
g+U\sub(W_1+\cdots+W_n)+(W_{n+1}+W_{n+2}+\cdots)=W_D.
$$
Therefore, $W_D$ is open in $B(X)$.
\end{proof}

\begin{proof}[Proof of Theorem~\ref{Th:Main}]
Let $X$ be a Dieudonn\'e complete space. Suppose for a contradiction that there 
exists a Cauchy filter $\mathscr F$ on $(B(X), \mathscr U)$ for which $\bigcap 
\{\overline F: F\in \mathscr F\}=\varnothing$. Using Lemmas~\ref{Le:add1},  
\ref{Le:modif}, and~\ref{Le:sum}, we find a sequence of $F_n\in \mathscr F$, 
$n\in \mathbb N$, and a sequence of sequences $D_n=(d_n(k))_{k\in 
\mathbb N}$, $n\in \mathbb N$, of continuous pseudometrics on $X$ such 
that $F_n\cap (B_{3n}(X)+W_{D_n})=\varnothing$, $F_{n+1}\subset F_n$, 
$d_{n+1}(k)\ge d_{n}(k)$, and $d_k(n+1)\ge d_k(n)$ for 
all $n,k\in \mathbb N$. Let $D=(d_n(n))_{n\in \mathbb N}$. 

For each $n\in \mathbb N$, we have $F_n\cap 
(B_n(X)+W_D)=\varnothing$. Indeed, any $g\in W_D$ can be 
represented as $g=x_1+y_1+x_2+y_2+ \dots +x_k+y_k$, where $k \in \mathbb N$ 
and,  for each $i\le k$, $x_i, y_i\in X$ and $d_i(i)(x_i, y_i)<1$. If $k\le n$, 
then $B_n(X)+g\subset B_{3n}(X)$. Since $F_n\cap 
(B_{3n}(X)+W_{D_n})=\varnothing$ and $W_{D_n}$ is a neighborhood of zero, it 
follows that $F_n\cap B_{3n}(X)=\varnothing$ and hence $(B_n(X)+g)\cap F_n = 
\varnothing$. Suppose that $k>n$. For any $h\in B_n(X)$, we have 
$h+x_1+y_1+x_2+y_2+\dots +x_n+y_n\in B_{3n(X)}$, and for $i>n$, 
$d_i(i)\ge d_n(i-n)$, so that $x_{n+1}+y_{n+1}+\dots +x_k+ y_k\in 
W_{D_n}$. Therefore, $h+g\in B_{3n}(X)+W_{D_n}$. Thus, $B_n(X)+g\sub 
B_{3n}(X)+W_{D_n}$. It follows from the arbitrariness of $g\in W_D$ that  
$B_n(X)+W_D\sub B_{3n}(X)+W_{D_n}$. Since $F_n\cap 
(B_{3n}(X)+W_{D_n})=\varnothing$, we have $F_n\cap 
(B_n(X)+W_D)=\varnothing$. 

Take an $F\in \mathscr F$ such that $F+F\subset W_D$. Choose any $x^*_1+\dots+ 
x^*_k\in F$; we assume that $x^*_i\in X$ for $i\le k$. We have $x^*_1+\dots + 
x^*_k+F\subset W_D$ and hence $F\subset x^*_1+\dots x^*_k+W_D\subset 
B_k(X)+W_D$. This means that $F\cap F_k=\varnothing$, which contradicts 
$\mathscr F$ being a filter. 
\end{proof}

\end{document}